\newtheorem{theorem}{Theorem}
\newtheorem{corollary}{Corollary}
\newtheorem{definition}{Definition}
\newtheorem{example}{Example}
\newtheorem{lemma}{Lemma}
\newtheorem{proposition}{Proposition}
\newtheorem{remark}{Remark}
\tikzset{sgplattice/.style={inner sep=1pt,norm/.style={red!50!blue},char/.style={blue!50!black},
  lin/.style={black!50}},cnj/.style={black!50,yshift=-2.5pt,left=-1pt of #1,scale=0.5,fill=white}}
\title{A survey of the number of supersolvable subgroups of finite groups}
\author{
 Primitivo B. Acosta-Hum\'anez \\
 Instituto de Matem\'atica \& Escuela de Matem\'atica\\
Universidad Aut\'onoma de Santo Domingo \\
  Dominican Republic \\
  \texttt{pacosta-humanez@uasd.edu.do} \\
   \AND
  Orieta Liriano \\
 Instituto de Matem\'atica \& Escuela de Matem\'atica\\
Universidad Aut\'onoma de Santo Domingo \\
Ciclo de Estudios Generales\\
Universidad Iberoamericana\\
Dominican Republic \\
  \texttt{oliriano25@uasd.edu.do} \\
  \And
Francis Mora-Ferreras \\
Instituto de Matem\'atica\\
Universidad Aut\'onoma de Santo Domingo \\
Dominican Republic \\
Instituto de Matem\'aticas\\
Universidad de Talca, Chile\\
\texttt{fmora@inst-mat.utalca.cl} \\
  }
\begin{document}
\maketitle
\begin{abstract}
  {In this paper we survey a new criteria for solvability of finite groups in terms of number of supersolvable (also known as polycyclic) and non-supersolvable subgroups. In particular, we present original examples of supersolvable groups such as non-abelian groups with two cyclic generators. Additionally, we present some examples and applications in GAP system providing some description about recent criteria for solvability and properties of finite groups based on the number of supersolvable and non-supersolvable subgroups of a finite group $G$. }
\end{abstract}
\keywords{Finite groups\and  Polycyclic groups \and Solvable groups \and Supersolvable groups }

\tableofcontents

\section{Introduction}
Since the creation of the theory of groups, it is known as one of the most abstract theories of mathematics. However, owing to the development of computational applications to deal with problems associated with group theory, it has had a great boom in recent decades. Some researchers called this combination of group theory and computational applications as \emph{Computational Group Theory}. Combinatorial aspects of Group Theory, such as the computation of the number of subgroups of certain type,  has been developed by some researchers in recent decades and they called it \emph{Combinatorial Group Theory}. In this sense, this paper falls in the topics Combinatorial Group Theory and Computational Group Theory.\\

Using the free access tool \href{www.zbmath.org}{\emph{zentralblatt}} we found 585 documents on Combinatorial Group Theory, among which there are 272 papers, 226 chapter of books and 84 books. Likewise, a search was made for the theory of Computational Groups and 320 documents were found, among which are 139 papers, 150 chapter of books and 31 books. The intersection of the two sets is 14 documents, 1 book, and 14 papers in special volumes. Thus, the theory of finite groups is an important topic in mathematics because from the theoretical point of view it lead to new developments in other in mathematics and also it has been applied successfully in other areas such as physics, chemistry among others.\\

On the other hand, since the seminal work of E. Galois, in where the concept of \emph{Solvable Group}, which involves abelian groups, was introduced, see \cite{galois1830,galois1830note}, the solvability of groups has been deeply studied for a plenty of researchers in algebra, see \cite{feit1963solvability}. In the Century XX was introduced an intermediate category of groups between abelian and solvable groups, the so-called \emph{supersolvable} (see \cite{weinstein1982}), \emph{supersoluble} (\cite{baer1955}) or \emph{polycyclic} groups (see \cite{remeslennikov1969}).\\

In 1924 Schmidt \cite{schmidt1924} studied non nilpotent groups whose proper subgroups are nilpotent and proved that these groups are solvable. Such groups are now known as Schmidt groups. Rédei in \cite{redei1956endlichen} and Ballester-Bolinches, Esteban-Romero and Robinson in \cite{ballester2005finite} classify Schmidt groups. In \cite{ballester2005finite} they are called minimal non-nilpotent groups.\\

Then, later in 2012, Zarrin \cite{zarrin2012} extends Schmidt's result and characterizes the semi-simple groups $G$ with at most 65 non-nilpotent subgroups.  For them Zarrin introduces the class $\mathcal{S}^n$; which is the class of groups with exactly $n$ non-nilpotent subgroups, i.e. $G$ is an $\mathcal{S}^n$-group if it has n non-nilpotent subgroups or $G\in \mathcal{S}^n$. Zarrin proves the following results in \cite{zarrin2012}. Given $G$ a finite group that is a $\mathcal{S}^{n}$-group with $n\leq22$. Then $G$ is solvable and $G$ is isomorphic to alternating group of degree 5. Also if $G$ is a alternating group of degree 5, symmetric group of degree 5, or or the special linear group of degree 2 over a field $\mathbb{F}_7$ is the set of $2\times2$ matrices with determinant 1.\\

Based on Zarrin's results on \cite{zarrin2012}, he obtains that the only simple groups with exactly 22 non-nilpotent subgroups is alternating group of degree 5 and that the only simple group with exactly 65 non-nilpotent subgroups is the special linear group of degree 2 over a field $\mathbb{F}_7$. A corollary to this is that if a non-solvable group $G$ is an $S^{n}$-group with $n\leq 22$, then $G/\Phi(G)$ is isomorphic to alternating group of degree 5.\\

In this same paper Zarrin \cite{zarrin2012} conjectured that a finite non-solvable group that is a $S^{22}$-group is isomorphic to alternating group of degree 5 or the special linear group of degree 2 over a field $\mathbb{F}_7$.\\

This conjecture was proven by Ballester-Bolinches, Esteban-Romero and Lu in 2017 in \cite[Theorem A]{BRL2017}. That is, they proved the following result :
 given $G$ be a solvable group. Then $G$ has exactly 22 non-nilpotent subgroups if and only if it is isomorphic to alternating group of degree 5 or the special linear group of degree 2 over a field $\mathbb{F}_5$.\\

This papers is structured as follows. Section 2 contains some theoretical aspects concerning basic group theory, necessaries to understand the rest of the paper. Section 3 contains concepts supersolvable groups and some remarkable results which are well known in this theory. Section 4 contains some relevant aspects of supersolvable groups and Zarrin criteria to obtain solvable groups. Finally, in Section 5 we introduce new results concerning the supersolvability of non-abelian groups generated by two cyclic generators and we included some programs developed in GAP system.

\section{Preliminaries about classical Group Theory}

This section contains the basic theoretical background that every undergraduate student of algebra knows. We start introducing concepts such as groups, subgroups, normal subgroups, homomorphisms, solvable groups. We follows some classical books such as Our start point is the well known book written by Doerk K. and  Hawkes T., see \cite[pag. 1]{doerk2011}.\\

In this way, a group is a non-empty set $ G $ equipped with a binary operation which is associative $$g(hw)=(gh)w \quad \quad \text{for all} \ \ g,h,w \in G$$ (which we will usually denote multiplicative by juxtaposition) with the property that $G$ has an identity element $1 = 1_{G}$ that satisfies
$$
1 g = g 1 = g \quad \text {for all} \ \ g \in G
$$
and for each $ g $ of $ G $ has right and left inverse $ g ^ {- 1} $ satisfying
$$
g ^ {- 1} g = g g ^ {- 1} = 1
$$
If $ h g = g h $ for all $ g, h \in G $, we say that $ G $ is an {\bf  abelian group}. The order of $G$ is the cardinality of the set and is written $ | G | $. If $ | G | $ is finite, we say that $ G $ is finite.\medskip

Throughout this paper, $G$ will always denote {a finite group.}\medskip

	A subset $ U $ of $ G $ is called { \bf subgroup} if it is a group with respect to the binary operation defined on $ G $: for this we write $ U \leq G $ and $ U < G $ when $ U \neq G $. If $U < G$, we call $U$ a proper subgroup of $G$. We will use the symbol $1$ to denote the identity subgroup $ \{1 \} $ of a group. If $ U $ is a proper subgroup with the property that $ U = V $ whenever $ U \leq V < G $, we call $ U $ a {\bf maximal subgroup} of $ G $; therefore, the maximal subgroups are precisely the maximal elements of the set of partially ordered proper subgroups by inclusion of $G$. \medskip

If $ U \leq G $ and $ g \in G $, we write $ U g $ instead of $ U \{g \} $ and call $ U g $ a right {coset} of $ U $ in $ G $, the left cosets are defined analogously. Calling two elements $ g $ and $ h $ of $ G $ equivalent if and only if $ hg ^ {- 1} \in U $, we obtain an equivalence relation on $ G $ whose equivalence classes are exactly the right cosets of $ U $ in $ G $ (The set of right cosets of $ U $ in $ G $ will be denoted by $ G / U $). It follows that these right cosets form a partition of $G$ and, in particular, that there exists a subset $T$ of $G$ such that
$$
G=\bigcup_{t \in T} Ut, \quad \text { y } \quad Us \cap Ut=\varnothing
$$
provided that $ s, t \in T $ and $ s \neq t. $ This partition is called the right lateral decomposition of $ G $ by $ U. $ A set $ T $ of the type just described is called a right transversal of $ U $ into $ G $; is a set that contains exactly one element from each right coset, so the number of right cosets is $ \prod _ {t \in T} \left | U{t} \right | $
\begin{example}
Consider $$ G = D_ {8} :=
\left\langle s, r \mid r^{4}=s^{2}=1, s r s^{-1}=r^{-1}\right\rangle=\{1,r,r^2,r^3,s,sr,sr^2,sr^3\}
$$ the diedral group of order $8$ and consider $\langle r^2\rangle=\{1,r^2 \}$. \medskip

Therefore, we observe $$
D_{8} /\left\langle r^{2}\right\rangle=\left\{g\left\{1, r^{2}\right\}: g \in D_{8}\right\}=\left\{\left\{1, r^{2}\right\},\left\{r, r^{3}\right\},\left\{s, s r^{2}\right\},\left\{s r, s r^{3}\right\}\right\}
$$
That is, there are $4$ left cosets. Moreover, such left cosets are explictly given by $$D_8=\left\{1, r^{2}\right\}\cup \left\{r, r^{3}\right\}\cup\left\{s, s r^{2}\right\}\cup\left\{s r, s r^{3}\right\}$$
\end{example}

There is a corresponding partition of $G$ into the left cosets of $U$, and a complete set of representatives of the left cosets is called a left traverse of $U$ into $G$.\medskip

The mapping $U g \rightarrow g ^{- 1} U $ is a bijection from the set $G/U$ of right cosets to the set of left cosets of $U$ in $G; $ the common cardinality of these two sets is called the index of $ U $ in $ G $ and is written $ | G: U | $.\medskip

If $ g \in G $, the mapping $ u \rightarrow ug $ is a mapping from $ U $ to the coset $ Ug$. Thus, all right cosets have the cardinality $ | U | $, and we get the following famous Lagrange theorem, see \cite[Theorem 1.4, pag. 3]{doerk2011}.

\begin{theorem}[Lagrange's Theorem] If $ U $ is a subgroup of a group $ G $, then $ | G | = | G: U || U | $. In particular, if $G$ is finite, $|U|$ is a divisor of $ | G | $.
\end{theorem}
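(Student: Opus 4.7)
The plan is to combine three facts that the authors have already set up in the paragraphs immediately preceding the theorem. First, the right cosets $\{Ut : t \in T\}$ of $U$ in $G$ partition $G$, so $G$ is the disjoint union $\bigsqcup_{t \in T} Ut$, where $T$ is a right transversal. Second, for every $g \in G$ the translation map $u \mapsto ug$ is a bijection from $U$ onto the coset $Ug$, so that $|Ug| = |U|$. Third, by the definition of index given just above the theorem, $|T| = |G:U|$, since a right transversal contains exactly one element from each right coset.

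The proof itself is then a one-line counting argument. I would take cardinalities on both sides of the disjoint-union decomposition, invoke additivity of cardinality over disjoint unions, and substitute the common value $|Ut| = |U|$:
$$|G| \;=\; \Bigl|\bigsqcup_{t \in T} Ut\Bigr| \;=\; \sum_{t \in T}|Ut| \;=\; \sum_{t \in T}|U| \;=\; |T|\cdot|U| \;=\; |G:U|\cdot|U|.$$
The second assertion then follows at once: if $G$ is finite, then $|G|$, $|G:U|$ and $|U|$ are positive integers, and the displayed equation exhibits $|U|$ as a factor of $|G|$.

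There is essentially no obstacle in this argument; the theorem is almost a direct corollary of the coset structure already developed. The only step meriting a sentence of justification is the bijectivity of $u \mapsto ug$: injectivity comes from right-cancellation (if $u_1 g = u_2 g$, multiplying on the right by $g^{-1}$ yields $u_1 = u_2$), and surjectivity is immediate from the definition $Ug = \{ug : u \in U\}$. If one wanted the identity $|G| = |G:U|\cdot|U|$ to hold in full generality for infinite groups, one should interpret it in the sense of cardinal arithmetic, which amounts to noting that $(t,u) \mapsto ut$ is a bijection $T \times U \to G$; but only the finite case is needed in what follows, and in that case the argument above is entirely elementary.
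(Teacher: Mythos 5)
Your argument is correct and is exactly the one the paper itself relies on: the text immediately before the theorem establishes the coset partition and the bijection $u \mapsto ug$ showing $|Ut| = |U|$, and then states Lagrange's theorem as the resulting count (citing Doerk--Hawkes rather than writing the sum out). Your proposal simply makes that counting step explicit, so there is nothing to add.
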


From the previous example we have $|D_8|=|D_8:\langle r^2\rangle||\langle r^2\rangle|=|D_8/\langle r^2\rangle||\langle r^2\rangle|=4\cdot2$.\medskip

If $ g $ is an element of a finite group $ G $, there exists a smallest natural number $n$ such that $g ^{n} = 1$; this is called the order of $g$ and is written $o(g)$. It is easy to see that $ o (g) = | \langle g \rangle | $, the order of the {cyclic subgroup} generated by $ g $, and therefore $ o (g) $ divides $| G | .$ The least common multiple of the integers $ \{o (g): g \in G \} $ is called the exponent of $ G $ and is written $ \operatorname {Exp} (G ) $.

\subsection{About $p$-groups and $p$-subgroups}

If $p$ is a prime number, we say that $G$ is a {\bf $p$-group} if $G$ is finite and $|G|=p^n$ for some $n\in \mathbb{N }.$
If we have a finite group $G$ and a prime number $p$, a {\bf $p$-subgroup} of $G$ is a subgroup $H$ of $G$ whose order is a power of $p \ \ (|H|=p^m \ \ \text{for some } \ m\in\mathbb{N})$. If $ | G | = p ^ {\alpha} m $ where $ p $ does not divide $ m $, then a subgroup $H$ of order $ p ^ {\alpha} $ is called a $ p$-Sylow subgroup of $ G$.\medskip

The set of Sylow $p$-subgroups is denoted by $\operatorname{Syl}_p(G)$. An almost immediate result is that if $H$ is a subgroup of a finite group $G$. Then $H\in\operatorname{Syl}_p(G)$ if and only if $|H|$ is a power of $p$ and also $|G:H|$ is not divisible by $p$ . \medskip


\begin{theorem}[Sylow. Existence \cite{navarro2017curso}]
    Let $G$ be a finite group. Then $\operatorname{Syl}_p(G)$ is not an empty set.
\end{theorem}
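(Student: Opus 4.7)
The plan is to proceed by strong induction on $|G|$, writing $|G|=p^{\alpha}m$ with $\gcd(p,m)=1$. The base case $|G|=1$ is vacuous (take $\alpha=0$), so assume the statement for every group of order strictly less than $|G|$. To organise the inductive step I would use the class equation
\[
|G| \;=\; |Z(G)| \;+\; \sum_{i=1}^{k}\,[G:C_G(x_i)],
\]
where $x_{1},\dots,x_{k}$ are representatives of the non-central conjugacy classes, and split according to whether $p$ divides $|Z(G)|$.

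If $p$ divides $|Z(G)|$, I would first establish Cauchy's theorem for finite abelian groups (a short independent induction via a quotient by any cyclic subgroup); this produces an element of order $p$ in $Z(G)$, generating a normal subgroup $N$ of $G$ of order $p$. The quotient $G/N$ has order $p^{\alpha-1}m$, so by the inductive hypothesis it contains a subgroup $\bar P$ of order $p^{\alpha-1}$, and the preimage of $\bar P$ under the canonical projection $G\to G/N$ is a subgroup of $G$ of order $p^{\alpha}$, as required. In the remaining case $p\nmid |Z(G)|$, yet $p\mid |G|$, so the class equation forces some index $[G:C_G(x_i)]$ to be coprime to $p$; hence $p^{\alpha}$ divides $|C_G(x_i)|$. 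Since $x_i$ is non-central, $C_G(x_i)$ is a proper subgroup of $G$, and applying the inductive hypothesis to $C_G(x_i)$ yields a subgroup of order $p^{\alpha}$, which is simultaneously a Sylow $p$-subgroup of $G$.

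The main obstacle is the abelian Cauchy step: without it the central case collapses, because the induction cannot otherwise reach groups whose $p$-part is concentrated in the centre. An appealing alternative that avoids Cauchy entirely is Wielandt's counting argument: let $G$ act by left multiplication on the family $\Omega$ of all $p^{\alpha}$-subsets of $G$; a short calculation shows that $\binom{p^{\alpha}m}{p^{\alpha}}$ is not divisible by $p$, so at least one $G$-orbit in $\Omega$ has size coprime to $p$, and one checks that the stabiliser of any subset in such an orbit has order exactly $p^{\alpha}$. I would keep Wielandt's proof in reserve as it sidesteps the one technical lemma needed by the class-equation approach, at the cost of a slightly less elementary combinatorial estimate.
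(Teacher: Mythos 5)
Your argument is correct, but note that the paper itself offers no proof of this statement: it is quoted as a known theorem with a citation to a textbook, so there is nothing internal to compare against. Your class-equation induction is the standard complete argument: the split on whether $p$ divides $|Z(G)|$ is handled correctly in both branches --- the central case via abelian Cauchy, a central subgroup $N$ of order $p$, and the correspondence theorem applied to a Sylow subgroup of $G/N$ of order $p^{\alpha-1}$; the non-central case via the observation that $p\nmid|Z(G)|$ together with $p\mid|G|$ forces some $[G:C_G(x_i)]$ to be prime to $p$, so that $C_G(x_i)$ is a proper subgroup whose order is divisible by the full $p$-part $p^{\alpha}$. You are also right that the abelian Cauchy lemma is the one genuinely separate ingredient, and your fallback to Wielandt's orbit-counting argument on $p^{\alpha}$-subsets is a legitimate way to avoid it (the stabiliser of a subset in an orbit of size prime to $p$ has order divisible by $p^{\alpha}$ and at most $p^{\alpha}$, since it acts freely on the subset). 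Either route is acceptable; the only cosmetic remark is that the degenerate case $p\nmid|G|$ should be dispatched explicitly by taking the trivial subgroup, since your Case 2 assumes $p\mid|G|$.
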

A corollary that follows from this theorem and is attributed to Cauchy is that if $G$ is a finite group and $p$ is a prime that divides $|G|$, then there exists $x\in G$ of order $p $. Also a result that is not very difficult to prove is that for $p$ prime and $p^a\mid |G|$, then there exists a subgroup $H$ of $G$ of order $p^a$, since if $P\in \operatorname{Syl}_p(G)$ then $p^a\mid |P|.$\medskip

\begin{corollary}[Sylow. Dominance, \cite{navarro2017curso}]
    Let $G$ be a finite group and let $p$ be a prime. If $H$ is a $p$-subgroup of $G$ it is contained in some $p$-Sylow of $G$.
\end{corollary}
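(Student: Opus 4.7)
The plan is to combine the Sylow Existence Theorem just stated with a group-action argument. By existence, we may fix some Sylow $p$-subgroup $P$ of $G$, so $|P| = p^{\alpha}$ and $|G:P|$ is coprime to $p$. The strategy is not to show that $H$ lies in this particular $P$, but rather that $H$ is contained in some conjugate $gPg^{-1}$, which, having the same order as $P$, is again a Sylow $p$-subgroup of $G$.

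To produce such a conjugate, I would let $H$ act on the set $\Omega = G/P$ of left cosets of $P$ in $G$ by left multiplication, $h \cdot (gP) = (hg)P$. Two features of this action are decisive. First, the total cardinality $|\Omega| = |G:P|$ is coprime to $p$. Second, by the orbit-stabilizer theorem every $H$-orbit has size dividing $|H|$; since $H$ is a $p$-subgroup, every orbit has size a power of $p$.

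The central step is then a counting argument: writing $|\Omega|$ as the sum of the sizes of the $H$-orbits, one has a sum of powers of $p$ equal to a number coprime to $p$. Consequently at least one orbit must have size $1$. A fixed coset $gP$ satisfies $hgP = gP$ for every $h \in H$, which translates to $g^{-1}Hg \le P$, or equivalently $H \le gPg^{-1}$. Since $|gPg^{-1}| = p^{\alpha}$, this conjugate is an element of $\operatorname{Syl}_p(G)$ containing $H$.

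The main obstacle, to the extent there is one, is bookkeeping rather than ideas: one must be careful to use that $p \nmid |G:P|$ (this is exactly the characterization of Sylow subgroups recorded just before the Existence Theorem) and that $H$ being a $p$-group forces the orbit sizes to be $p$-powers. Once both of these inputs are in place, the existence of a fixed coset is forced, and the translation from a fixed point of the coset action to a containment of $H$ in a conjugate of $P$ is routine.
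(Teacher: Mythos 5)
Your argument is correct and complete: fixing one Sylow $p$-subgroup $P$ (which exists by the Existence Theorem), letting $H$ act on $G/P$ by left translation, observing that all orbit sizes are powers of $p$ while $|G:P|$ is coprime to $p$, and extracting a fixed coset $gP$ with $H \le gPg^{-1}$ is the standard proof of Sylow dominance, and every step you cite (the characterization $p \nmid |G:P|$, orbit--stabilizer, the counting of orbit sizes modulo $p$) is justified. The paper itself gives no proof of this corollary --- it simply states it with a citation to the reference --- so there is no competing argument to compare against; your proposal fills that gap. It is worth noting that what you actually establish is the slightly stronger statement recorded in the paper's text immediately after the corollary, namely that for \emph{any} fixed $P \in \operatorname{Syl}_p(G)$ there is $g \in G$ with $H \le P^g$; applying this with $H$ itself a Sylow $p$-subgroup immediately yields the Conjugacy corollary as well, so your single argument covers both results the paper lists.
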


The following corollary follows from the theorem that states that $G$ is a finite group and $p$ is a prime. If $H$ is a $p$-subgroup of $G$ and $P\in \operatorname{Syl}_p(G)$, then there exists $g \in G$ such that $H$ is a subset $P^g$.\medskip

\begin{corollary}[Sylow. Conjugacy \cite{navarro2017curso}]
  Consider $P,Q \in \operatorname{Syl}_p(G)$. Then there exists $g \in G$ such that $P = Q^g$
\end{corollary}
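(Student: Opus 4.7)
The plan is to derive this conjugacy statement directly from the two results the authors have already placed immediately before it: the existence theorem (so that the class $\operatorname{Syl}_p(G)$ contains the objects we are manipulating) and, more importantly, the dominance corollary, which asserts that every $p$-subgroup of $G$ is contained in some conjugate $P^g$ of any fixed Sylow $p$-subgroup $P$. Since every Sylow $p$-subgroup is, in particular, a $p$-subgroup, the heavy lifting has already been done.

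Concretely, I would proceed as follows. First, take the two Sylow $p$-subgroups $P, Q \in \operatorname{Syl}_p(G)$ given in the hypothesis; by definition both have order $p^{\alpha}$, where $|G| = p^{\alpha} m$ with $\gcd(p,m)=1$. Next, regard $P$ merely as a $p$-subgroup of $G$ and apply the dominance corollary to the pair $(P, Q)$ (with $Q$ playing the role of the fixed Sylow subgroup). This yields an element $g \in G$ such that
\[
P \;\subseteq\; Q^{g}.
\]
Finally, I would observe that conjugation by $g$ is a bijection of $G$ onto itself, so $|Q^{g}| = |Q| = p^{\alpha} = |P|$. Two finite sets of the same cardinality one of which contains the other must coincide, hence $P = Q^{g}$, which is the desired conclusion.

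The only real subtlety, and thus the only conceivable obstacle, is justifying the equality $|Q^{g}| = |Q|$; but this is just the remark that the map $q \mapsto g^{-1}qg$ is injective (its inverse is $x \mapsto gxg^{-1}$), so it sends $Q$ to a subset of $G$ of the same size. Apart from this one-line point, the proof is essentially a specialization of the dominance corollary, and no new Sylow-theoretic machinery needs to be invoked. I would therefore expect the write-up to be quite short, perhaps three or four lines of text, once the reader is reminded that the Sylow subgroups $P$ and $Q$ share a common order.
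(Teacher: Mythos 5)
Your argument is correct and matches what the paper does: the authors likewise derive conjugacy from the statement that any $p$-subgroup $H$ lies in some conjugate $P^g$ of a fixed Sylow $p$-subgroup, specializing to $H=P$ and concluding by comparing orders. Your write-up is actually more complete than the paper's, which only asserts that the corollary ``follows from'' that containment without spelling out the cardinality step.
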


\subsection{Homomorphisms}
Let $G$ and $H$ be groups. A mapping $ \alpha: G \rightarrow H $ is called a homomorphism if
$$
\alpha(x y) = \alpha(x) \alpha(y)
$$
for all $ x, y \in G $.\medskip

We would not be observing any hard and fast rules about writing maps left or right, but will simply choose whichever side seems most appropriate in a given context. As usual, an injective homomorphism (respectively \emph{surjective, bijective}) is called a \emph{monomorphism} (respectively \emph{epimorphism, isomorphism}). Occasionally, the notation $ \alpha: G \mapsto H $ will mean a monomorphism and $ \alpha: G \rightarrow H $ an epimorphism. If there is an \emph{isomorphism} $ \alpha: G \rightarrow H $, we say that $ G $ is \emph{isomorphic} with $ H $ (or that $ G $ and $ H $ have the same type of isomorphism) and we write $ G \cong H $. \medskip

A homomorphism $ \alpha $ of $ G $ is likewise called an \emph{endomorphism} and when $ \alpha $ is bijective, an \emph{automorphism} of $ G. $ The set of all automorphisms of $ G $ forms a group under the binary operation composition of mappings; this is called the \emph{automorphism} group of $G$, is denoted by $\operatorname{Aut}(G)$, and is obviously a subgroup of $\operatorname{Sym}(G)$.\medskip

If $ g, h \in G $, we set $ h ^ {g} = g ^{-1} hg $, and if $ X $ is a non-empty subset of $ G $, we define $ X ^{g} $ to be the set $ \left \{x^{g}: x \in X \right \}. $ In addition we use $ X^{G} $ to denote the set
$$
X^{G}=\left\{X^{g}: g \in G\right\}
$$

from {conjugates} from $X$ into $G$. The mapping $ \rho_ {g} : G \rightarrow G $ defined by
$$
\rho_{g}(h)=h^{g}
$$
for all $ h \in G $ is easily seen as an automorphism of $ G $; is called the \emph{inner automorphism} induced by $g. $ An automorphism $ \alpha $ of $ G $ is called { $\operatorname{inner}$} if $ \alpha = \rho_ {g} $ for some $ g \in G $, and the set of all inner automorphisms they are denoted by $ \operatorname {Inn} (G)$. Obviously, { a group is abelian} if and only if $ \operatorname {Inn} (G) = 1. $ So if $G$ is abelian and $g,h\in G$ then $$\rho_g(h )=h^g=g^{-1}hg=g^{-1}gh=h.$$

An $N$ subgroup of $G$ that is invariant under all inner automorphisms (for which therefore $N ^ {g} = N$ for all $g \in G$) is called a { normal subgroup } of $ G. $ If $ N $ is a normal subgroup of $ G $, we denote it symbolically by $ N \unlhd G$ ( and by $ N \triangleleft G $ when $ N \neq G $).\medskip

We can observe that $1$ and $G$ are always normal subgroups of $G$, and a group $G \neq 1$ with no other normal subgroups is called a \emph{simple}. Thus $G$ is simple if and only if it has precisely two normal subgroups.\medskip

A subgroup $U$ of a group $G$ is considered subnormal in $G$ if there exists a chain of subgroups $U_{0},U_{1},\ldots,U_{r}$ of $G$ such that
$$
U = U_ {0} \unlhd U_ {1} \unlhd \cdots \unlhd U_ {r-1} \unlhd U_ {r} = G
$$
This is called a subnormal string from $U$ to $G$. We will call it a composition series if each of its \emph{factors} $$ U_{t} / U_{i-1}\ (i = 1, \ldots, r) $$ \emph{is simple}, in which case the factors are called \emph{composition factors} of $ G $.




\subsection{Group action}

We say that given a group $G$ and $X$ a non-empty set, an action of $G$ on $X$ is a function $$ G\times X\longrightarrow X$$ that assigns each element $\sigma \in G$ and $x \in X$ a single element $\sigma x \in X$. \medskip
The action of $G$ on $X$ checks the following properties:

\begin{enumerate}
    \item If $e$ is the identity element of $G$, then $\forall x\in X$ then $e x=x$.
    \item  If  $\sigma , \tau \in G$, then $\left ( \sigma  \tau  \right ) x=\sigma \left ( \tau  x \right )$.
\end{enumerate}
\begin{definition}
If $X$ is a set satisfying a action of $G$, then we say that $X$ is a $G$-Set.
\end{definition}  
Here are some examples of group actions:
\begin{example}
Consider any non-empty set $X$, let $ G\times X\longrightarrow X$ be defined with $ \sigma x= x $ . It is evident that this is an action of $G$ on $X$, in this case $G$ is said to act trivially on $X$.
\end{example}
\begin{example}
If $X$ is any non-empty set, let $G=S_{X}$ be the group of permutations of $X$. Then $G$ acts on $Y$ permuting its elements, this means that $ G\times X\longrightarrow X$ is given by $\ \sigma \ x=\sigma \left ( x \right ) $, where $\sigma \left ( x \right )$ is the function that calculates $x$.
\end{example}

\begin{example}
Let $G$ be a group and $H\leq G $. Then $H$ acts on $G$ through the product of $G$, that is, $ H\times G\longrightarrow G$ is given by $(h,\sigma)\mapsto h\sigma $(The action $\sigma$ gives us a translation of $h$ by the left). In particular, if $G=H$ then the group $G$ acts on itself.
\end{example}

\subsection{Semidirect product}

For this section we follow the references \cite{milne1996group, dummit1991abstract, mac1999algebra}. We start considering $N\triangleleft G$, i.e., $N$ is a normal subgroup of $G$, For each element $g$ of $G$ we can define an automorphism of $N$, $\varphi_g:\,N\rightarrow N$, such that $n \mapsto g n g^{-1}$. We can see that $\varphi$ induces the following homomorphism:
$$
\theta: G \rightarrow \operatorname{Aut}(N), \quad g \mapsto \varphi_{g} \mid N.
$$
On the other hand, if there exists a subgroup $Q$ of $G$ such that $G \rightarrow G / N$ induces the isomophism $Q \rightarrow G / N$, then  we can recover $G$ from $N, Q$, together with the restriction of $\theta$ to $Q$. Moreover, an element $g$ of $G$ can be written exclusively in the form
$$
g=n q, \quad n \in N, \quad q \in Q
$$
 $q$ should be the only one element of $Q$ mapping to $g N \in G / N$, and $n$ must be $g q^{-1}$. Thus, we have a one-to-one correspondence of sets
$$
G\rightarrow N\times Q, \quad \quad \quad 
N\times Q \rightarrow G
$$

If $g=n q$ and $g^{\prime}=n^{\prime} q^{\prime}$, then
$$
g g^{\prime}=(n q)\left(n^{\prime} q^{\prime}\right)=n\left(q n^{\prime} q^{-1}\right) q q^{\prime}=n \cdot \theta(q)\left(n^{\prime}\right) \cdot q q^{\prime}
$$

\begin{definition}\label{defsmp}
Let $N\triangleleft G$ and $Q\leq G$.
A group $G$ is a semidirect product of its subgroups $N$ and $Q$ whether the homomorphism $G \rightarrow G / N$ induces an isomorphism $Q \rightarrow G / N$.
\end{definition}

In an equivalent way, $G$ is a semidirect product of the subgroups $N$ and $Q$ whether
$$
N \triangleleft G ; \quad N Q=G ; \quad N \cap Q=\{1\}
$$
We can notice that $Q$ do not need to be a normal subgroup of $G$. When $G$ is the semidirect product of subgroups $N$ and $Q$, we write $G=N \rtimes Q$. That is, $N \rtimes_{\theta} Q$, where $\theta: Q \rightarrow \operatorname{Aut}(N)$ gives the action of $Q$ on $N$ by inner automorphisms.\\

\section{Solvable and nilpotent groups}
Solvable groups are very important in the abstract group theory as well in applications. In particular, solvability of groups give us some knowledge about mathematical physics concerning the phenomena, see \cite{abmw2011,athesis2009,manz1993,nicks2012}. In this section we present some aspects related to solvable and nilpotent groups, including properties, definitions and some known results. The concept of Frattini subgroup is also introduced.
\subsection{Nilpotent groups}
	
Following \cite{dummit1991abstract} for any (finite or infinite) group $ G $ defines the following subgroups inductively:
$$ Z_ {0} (G) = 1, \quad Z_ {1} (G) = {Z}(G):=\{g \in G: \forall h \in G,\,\, g h=hg\}$$
and $Z_{i + 1}(G)$ is the subgroup of $G$ containing $Z_{i}(G)$ such that
$$
Z_ {i + 1} (G) / Z_ {i} (G) = Z \left(G / Z_ {i} (G) \right)
$$
(ie $Z_{i + 1}(G)$ is the entire preimage in $G$ of the center of $G/Z_{i}(G)$ under the natural projection). The chain of subgroups
$$
Z_{0} (G) \leq Z_{1} (G) \leq Z_{2} (G) \leq \cdots
$$
is called the upper central series of $G$. (The use of the term \emph{upper} indicates that $ Z_{i} (G) \leq Z_{i + 1} (G) $.) \medskip

\begin{definition}[{Nilpotent Group}]
A group $G$ is called {nilpotent} if it has a finite chain (or series) of subgroups $\left\{G_{i}\right\}_{i=1}^{n} \subset G$:
$$
\left\{1_{G}\right\}=G_{0} \subseteq G_{1} \subseteq \cdots \subseteq G_{n}=G
$$
where for each $i=0,1, \ldots, n-1$ it is true that:

\begin{description}
\item[$\bullet$] $G_{i}$ is a normal subgroup on $G_{i+1}$, denoted norms like $G_{i} \triangleleft G_{i+1}$ \item[$\bullet $] $G_{i+1} / G_{i} \leq Z\left(G / G_{i}\right)$ for $i=0,1, \ldots, n-1$.
\end{description}
\end{definition}

\begin{remark}
A group $ G $ is nilpotent if $ Z_ {c} (G) = G $ for some $ c \in \mathbb {Z} $. The smallest of such $c$ is called the nilpotence class of $G$. If $ G $ is abelian, then $ G $ is nilpotent, even more so of class 1, provided that $ | G |> 1$, since in this case $G = Z(G) = Z_{1}(G)$.
\end{remark} 

The following are some properties of nilpotent groups
\begin{enumerate}
\item Let $G$ be a nilpotent group of class $r$ and $H \leqslant G$ and $N \triangleleft G$. Then $H$ and $G / N$ are nilpotent.
    \item Every finite $p$-group is nilpotent.
    \item If $G$ and $G_1$ are nilpotent groups then $G \times G_1$ is nilpotent.
    \item  Let $G$ be a finite group. Then the following are equivalent.
    \begin{enumerate}
        \item  $G$ is nilpotent.
\item If $H<G$ then $H \neq N_{G}(H)$.
\item All maximal subgroups of $G$ are normal.
\item All Sylow subgroups of $G$ are normal.
\item $a b=b a$ for all $a, b \in G$ whose orders are coprime.
\item $G$ is the direct product of its Sylows subgroups.
    \end{enumerate}
    
\end{enumerate}

 In the study of maximal subgroups, G. Frattini in \cite{frattini1885intorno} introduces what is known today as the Frattini subgroup, which is the intersection of the maximal subgroups of the group $G$. That is to say 

$$\Phi(G)=\bigcap\{M \leq G \mid M \ \text{maximal in } \ \ G\}.$$

An important result, which has a simple proof, is the following. It gives a good argument to work with the normality of maximal subgroups and today we know it as Frattini's argument. 

\begin{theorem}
If $G$ is finite then $\Phi(G)$ is nilpotent.
\end{theorem}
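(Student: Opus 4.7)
The plan is to apply the so-called Frattini argument to each Sylow subgroup of $\Phi(G)$ and conclude via the characterization of nilpotence already listed in property (4) above, namely that a finite group whose Sylow subgroups are all normal is nilpotent.

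First I would record that $\Phi(G) \trianglelefteq G$. This is immediate from the definition: every automorphism of $G$ permutes the set of maximal subgroups of $G$, so it fixes their intersection setwise; applying this to the inner automorphisms shows $\Phi(G)$ is normal (in fact characteristic) in $G$. Next, fix a prime $p$ dividing $|\Phi(G)|$ and let $P \in \operatorname{Syl}_p(\Phi(G))$. I would then run the Frattini argument: for any $g \in G$, conjugation by $g$ stabilizes $\Phi(G)$, so $P^g$ is again a Sylow $p$-subgroup of $\Phi(G)$; by the Sylow conjugacy corollary applied inside $\Phi(G)$, there exists $x \in \Phi(G)$ with $P^g = P^x$, whence $gx^{-1} \in N_G(P)$ and $g \in N_G(P)\,\Phi(G)$. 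Thus
$$
G = \Phi(G)\, N_G(P).
$$

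The crux of the argument is then to upgrade this factorization into the equality $N_G(P)=G$. Suppose for contradiction that $N_G(P)$ is a proper subgroup of $G$; then it is contained in some maximal subgroup $M<G$. By the very definition of $\Phi(G)$, we also have $\Phi(G)\le M$. But then $G = \Phi(G)\,N_G(P) \le M$, contradicting $M<G$. Hence $N_G(P)=G$, i.e.\ $P \trianglelefteq G$, and a fortiori $P \trianglelefteq \Phi(G)$.

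Since the prime $p$ was arbitrary, every Sylow subgroup of $\Phi(G)$ is normal in $\Phi(G)$. Invoking the equivalence listed in property (4) of nilpotent groups (``all Sylow subgroups of $G$ are normal'' implies $G$ nilpotent), we conclude that $\Phi(G)$ is nilpotent. The step I expect to require the most care is the Frattini argument itself, since it is the one place where normality of $\Phi(G)$ in $G$ interacts nontrivially with Sylow conjugacy inside $\Phi(G)$; once that identity $G=\Phi(G)\,N_G(P)$ is established, the contradiction with maximality and the conclusion are essentially formal.
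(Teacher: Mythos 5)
Your proof is correct and is precisely the standard argument the paper alludes to but does not write out: the paper only remarks that the result ``has a simple proof'' via the Frattini argument, and your write-up supplies exactly that---normality of $\Phi(G)$, the factorization $G=\Phi(G)\,N_G(P)$ for a Sylow $p$-subgroup $P$ of $\Phi(G)$, the non-generator property of $\Phi(G)$ forcing $N_G(P)=G$, and the characterization of nilpotence by normality of all Sylow subgroups. No gaps; every step is justified by results already quoted in the paper (Sylow conjugacy and property (4) of nilpotent groups).
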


This property can be used to characterize the nilpotency of a group $G$. If $G$ is a finite group then the following are equivalent:
 \begin{enumerate}
     \item $G$ is nilpotent.
  \item  $G / \Phi(G)$ is nilpotent.
  \item  Every maximal subgroup of $G$ is normal in $G$.
  \item  $G^{\prime} \leq \Phi(G)$
 \end{enumerate}
We recall that $G'$ denotes the derived of $G$, i.e., $G'=[G,G]$.
\subsection{Solvable groups}	
We start introducing the definition of Solvable group, also known as Soluble group, see \cite{milne1996group,dummit1991abstract,charris2,h2013I}.	

\begin{definition}[{Solvable group}]
A group $G$ is said to be \emph{solvable} if there is a finite chain of subgroups $\left\{G_{i}\right\}_{i=1}^{n} \subset G$:
$$
\left\{1_{G}\right\}=G_{0} \subseteq G_{1} \subseteq \cdots \subseteq G_{n}=G
$$
where for each $i=0,1, \ldots, n-1$ it is follows that:
\begin{enumerate}
\item [$\bullet$]$G_{i}$ is a normal subgroup on $G_{i+1}$, 

\item [$\bullet$] The quotient group $G_{i+1} / G_{i}$ is abelian.
\end{enumerate}

\end{definition}

Nilpotent groups are solvable and abelian groups are nilpotent. Unlike a solvable group, where the quotients of the normal series are abelian.\\

The following are some properties of solvable groups.

\begin{enumerate}
    \item  Every subgroup and every quotient of a solvable group is solvable.
    \item If $N \triangleleft G$ is such that $N$ is solvable and $G / N$ is solvable, then $G$ is solvable.
    \item Every finite $p$-group is solvable.
\end{enumerate}

Moreover, for all group $G$ satisfying $\Phi(G)=1$, all maximal subgroups of $G$ are solvable groups. Let $N$ be a nontrivial normal subgroup of $G$. Then, there exists a maximal subgroup $M$ of $G$ satisfying $G=NM$. Thus, if $N$ and $M$ are solvable, then $G$ is solvable too. It is consequence of Lemma \ref{Lemma 2.2}.

\section{Supersolvable Groups and Zarrin Criteria for Solvable Groups}

In this section we provide the concepts and known results about Supersolvable groups, also known as Supersuble groups or Polycyclic groups. Following \cite{r2012} and \cite{whvw1982} we have the following supersolvable group definition. Moreover, we present some results of Zarrin about non-nilpotent groups and their relation with Simple groups.

		\begin{definition}[{Supersolvable group}]
		A group $G$ is {supersolvable} if there exist normal subgroups $G_{i}$ with
$$
\left\{1_{G}\right\}=G_{0} \subseteq G_{1} \subseteq \cdots \subseteq G_{n}=G
$$
and where each factor $ G_ {i} / G_ {i-1} $ is cyclic for $ 1 \leq i \leq n $. It is clear that groups whose order is a power of a prime $p$ are supersolvable.	
	\end{definition}



\begin{example}
The group $S_3$ is supersolvable. It is due to $1_G\leq A_3\leq S_3$
\end{example}

\begin{example}
The group $A_{4}$ is not a supersolvable group, it is because $K_{4}$ is the unique normal subgroup of $A_{4}$.
\end{example}


 Finitely generated abelian groups are supersolvable. It is because if $G$ es finitely generated (cyclic) we can consider $$G=\langle g_1,g_2,\cdots, g_n\rangle$$ which is an abelian group. We can set $G_i=\langle g_i,g_{i+1},\cdots, g_n\rangle$, and we can notice  $G_i/G_{i-1}=\langle g_iG_{i-1}\rangle$ y $G_{i} \unlhd G, \forall 1 \leq i \leq n$, we have a normal chain of subgroups 
 $$1=G_1\subseteq G_2\subseteq\cdots\subseteq G_n=G$$ where the factors are cyclics. For instance, $G$ is supersolvable.\medskip
 
\begin{remark}
We can observe that supersolvable groups are finitely generated.
\end{remark}

It can be seen that supersolvable groups are solvable since the factors $G_i/G_{i-1}$, are abelian (cyclic $\Rightarrow$ abelian) but being solvable does not imply being supersolvable (abelian is not necessarily cyclic).\medskip

\begin{example} 
We observe that $S_{4}$ is solvable because
$$
1 \subseteq K_{4} \subseteq A_{4} \subseteq S_{4}
$$
but $S_4$ has only the normal subgroups $A_{4}$ and $K_{4}$, therefore $S_{4}$ is not supersolvable
\end{example}


\begin{theorem}[See \cite{doerk2011,whvw1982}]
The following statements hold.
\begin{enumerate}
\item[ (a)] Every subgroup of a supersolvable group is supersolvable.
\item[ (b)] The image of homomorphims of a supersolvable group is supersolvable.
\end{enumerate}
\end{theorem}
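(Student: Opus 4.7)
The plan is to handle both parts by transporting the given supersolvable series of $G$ to a normal cyclic-factor series on the relevant group. Fix a series witnessing supersolvability,
\[
1 = G_0 \leq G_1 \leq \cdots \leq G_n = G, \qquad G_i \triangleleft G, \qquad G_i/G_{i-1}\text{ cyclic}.
\]
For (a), given $H \leq G$, I would define $H_i = H \cap G_i$ and claim that $1 = H_0 \leq H_1 \leq \cdots \leq H_n = H$ is a supersolvable series for $H$. For (b), given a homomorphism $\varphi : G \to K$, I would define $H_i = \varphi(G_i)$ and claim this gives a supersolvable series for $\varphi(G)$.

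For part (a), normality $H_i \triangleleft H$ is immediate: for $h \in H$, $h H_i h^{-1} \subseteq H$ (since $h \in H$) and $h H_i h^{-1} \subseteq G_i$ (since $G_i \triangleleft G$), so $h H_i h^{-1} \subseteq H_i$. The key point is showing $H_i/H_{i-1}$ is cyclic. Here I would invoke the second isomorphism theorem: since $H_{i-1} = H_i \cap G_{i-1}$, the inclusion $H_i \hookrightarrow G_i$ induces an injection
\[
H_i/H_{i-1} = H_i/(H_i \cap G_{i-1}) \; \hookrightarrow \; G_i/G_{i-1}.
\]
As $G_i/G_{i-1}$ is cyclic and every subgroup of a cyclic group is cyclic, $H_i/H_{i-1}$ is cyclic, finishing part (a).

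For part (b), replacing $K$ with $\varphi(G)$ I may assume $\varphi$ is surjective. Normality of $H_i$ in $\varphi(G)$ follows from the general fact that surjective homomorphic images of normal subgroups are normal: if $y \in \varphi(G)$ write $y = \varphi(g)$, then $y H_i y^{-1} = \varphi(g G_i g^{-1}) = \varphi(G_i) = H_i$. For the cyclic factors, the composition $G_i \xrightarrow{\varphi} H_i \twoheadrightarrow H_i/H_{i-1}$ is surjective and contains $G_{i-1}$ in its kernel, inducing a surjection
\[
G_i/G_{i-1} \;\twoheadrightarrow\; H_i/H_{i-1}.
\]
Since quotients of cyclic groups are cyclic, $H_i/H_{i-1}$ is cyclic.

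Neither step presents a serious obstacle; the only mild subtlety is keeping straight what sort of map the second isomorphism theorem gives in part (a) (an injection of $H_i/H_{i-1}$ into $G_i/G_{i-1}$) versus the surjection obtained from the universal property of the quotient in part (b). Both arguments parallel the classical proofs that the class of solvable (and of nilpotent) groups is closed under subgroups and homomorphic images, with the extra observation that ``cyclic'' is preserved both under passage to subgroups and under passage to quotients.
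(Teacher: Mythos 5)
Your proof is correct. For part (a) it is essentially the paper's own argument: intersect the given normal series with the subgroup, observe normality, and use the second isomorphism theorem to embed each factor $H_i/H_{i-1} = H_i/(H_i\cap G_{i-1})$ as a subgroup of the cyclic group $G_i/G_{i-1}$. The only cosmetic difference is that the paper phrases the conclusion in terms of deleting redundant (trivial) terms from the series, while you simply note that trivial and cyclic factors are both acceptable.

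Where you genuinely add something is part (b). The paper's displayed ``proof of (b)'' is in fact just the continuation of the argument for (a) — it still discusses the intersected series for the subgroup $S$ and never mentions a homomorphism — so the closure under homomorphic images is asserted but not actually argued there. Your argument (push the series forward to $H_i=\varphi(G_i)$, note normality of images of normal subgroups under a surjection, and obtain $H_i/H_{i-1}$ as a quotient of $G_i/G_{i-1}$ via the induced surjection, hence cyclic) is the standard one and fills that gap cleanly. One small point worth making explicit if you write this up: in part (a), to apply the second isomorphism theorem to $H_i$ and $G_{i-1}$ you are implicitly using that $G_{i-1}$ is normal in $G$ (not merely in $G_i$), which your chosen form of the supersolvable series guarantees; this is exactly why the definition of supersolvability demands normality of each $G_i$ in the whole group rather than only subnormality.
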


To illustrate the reader, we write the proof of the previous theorem for completeness. It is because it is classical result. For further details see \cite{whvw1982}.
\begin{proof}
We proceed according to each item.\medskip

 (a) Let $  {G} $ be a supersolvable group and let 
$$\{1 \} = G_ {0} \unlhd G_ {1} \unlhd G_ {2} \unlhd \ldots \unlhd G_ {r} = G$$
be a principal serie of $ G $. For the subgroup $ S $ of $ G $,
$$\{1 \} = S \cap G_ {0} \unlhd S \cap G_ {1} \unlhd S \cap G_ {2} \unlhd \ldots \unlhd S \cap G_ {r} = S $$

is a normal serie of $ S $ in where the factor $ \left(S \cap G_ {i} \right) / \left(S \cap G_ {i-1} \right) $ is isomorphic to the subgroup $\left(S \cap G_{i}\right) G_{i-1} / G_{i-1}$ de $G_{i} / G_{i-1}$.\medskip

(b) For all $ i, \ G_ {i} / G_ {i-1} $ is of first order, such that $ \left(S \cap G_ {i} \right) / \left(S \cap G_ {i-1 } \right) $ is either trivial or of first order. Thus, the previous normal serie for $ S $ will produce, after the elimination process of redundant terms, a principal serie of $ S $ with cyclic factors. Therefore, $ S $ is supersolvable. 
\end{proof}


\begin{theorem}[See \cite{doerk2011,whvw1982}]

Assume $ H \leq G, N \unlhd G $, where $ G $ is a supersolvable group. Then $ H $ and $ G / N $ are supersolvable groups.

\end{theorem}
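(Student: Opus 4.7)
The plan is to reduce both assertions to the previous theorem, which already records exactly the two statements needed. The first half, that $H$ is supersolvable, is item (a) of that theorem and so requires no new argument. For the quotient, I would invoke item (b) applied to the natural projection epimorphism $\pi \colon G \to G/N$, whose image is $G/N$ itself.

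To spell out the quotient case directly, I would start with a cyclic normal series $\{1\} = G_0 \unlhd G_1 \unlhd \cdots \unlhd G_r = G$ witnessing the supersolvability of $G$, and push it forward through $\pi$ by setting $K_i = G_i N/N$. Each $K_i$ is normal in $G/N$ because normality transfers along surjective homomorphisms, so we obtain a normal series $\{1\} = K_0 \leq K_1 \leq \cdots \leq K_r = G/N$ which may contain repetitions.

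The key step is to verify that each factor $K_i/K_{i-1}$ is cyclic. By the third isomorphism theorem, $K_i/K_{i-1} \cong G_i N / G_{i-1} N$, and the second isomorphism theorem (applied to $G_i$ and $G_{i-1} N$ inside $G_i N$) then gives $G_i N / G_{i-1} N \cong G_i / (G_i \cap G_{i-1} N)$. Since $G_{i-1} \leq G_i \cap G_{i-1} N$, this last group is itself a quotient of the cyclic group $G_i/G_{i-1}$ and hence cyclic. After deleting consecutive repetitions, the chain becomes a strictly increasing cyclic normal series for $G/N$, establishing its supersolvability.

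The main obstacle is purely notational bookkeeping, namely checking normality at each step and layering the isomorphism theorems correctly so that cyclicity descends through $\pi$; no new structural idea beyond the given supersolvable series of $G$ and the standard isomorphism theorems is required.
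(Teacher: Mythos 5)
Your proof is correct. Note that the paper itself states this theorem without proof, giving only a pointer to Doerk--Hawkes and to Weinstein et al., so there is no in-text argument to compare against; the closest material is the proof of the preceding theorem, whose part (a) gives the subgroup half exactly as you invoke it. For the quotient half, your direct argument is the standard one and is sound: since each $G_i$ and $N$ are normal in $G$, the product $G_iN$ is normal in $G$, so $K_i=G_iN/N$ is normal in $G/N$; the third and second isomorphism theorems identify $K_i/K_{i-1}$ with $G_i/(G_i\cap G_{i-1}N)$, and because $G_{i-1}\leq G_i\cap G_{i-1}N$ this is a quotient of the cyclic group $G_i/G_{i-1}$, hence cyclic, and deleting repetitions yields a supersolvable series for $G/N$. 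One remark: appealing to item (b) of the preceding theorem is legitimate in principle, but the paper's written proof of that item never actually treats homomorphic images --- it merely continues the subgroup computation with the factors $\left(S\cap G_i\right)/\left(S\cap G_{i-1}\right)$ --- so your explicit push-forward of the series through the projection $\pi\colon G\to G/N$ is not redundant; it supplies precisely the argument the paper omits.
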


\begin{theorem}[See \cite{doerk2011,whvw1982}]
The following statements hold.
\begin{enumerate}
\item[(1)] A direct product of a finite number of supersolvable groups is supersolvable.
\item[(2)] If $ H_ {1},  \ldots, H_ {n} $ are normal subgroups of $ G $ and the groups $ G / H_ {1}, \ldots, G / H_ {n} $ are supersolvable, then
$ G / \bigcap_ {i = 1} ^ {n} H_ {i} $ is supersolvable.
\end{enumerate}
\end{theorem}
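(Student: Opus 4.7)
The plan is to prove the two statements in order, with (1) giving the construction used to reduce (2) to earlier results.

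For part (1), I would argue by induction on $n$, so the crux is the case of two groups $A$ and $B$. Pick principal series with cyclic factors $1 = A_0 \unlhd A_1 \unlhd \cdots \unlhd A_r = A$ and $1 = B_0 \unlhd B_1 \unlhd \cdots \unlhd B_s = B$ witnessing supersolvability. The natural candidate is the concatenated chain
\[
1 \;\unlhd\; A_1\times 1 \;\unlhd\; \cdots \;\unlhd\; A_r\times 1 \;\unlhd\; A\times B_1 \;\unlhd\; \cdots \;\unlhd\; A\times B_s \;=\; A\times B.
\]
Each $A_i\times 1$ is normal in $A\times B$ because $A_i\triangleleft A$ and the two factors of the product commute, and likewise for $A\times B_j$; this is the small check to do carefully. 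The successive quotients are isomorphic to $A_i/A_{i-1}$ and $B_j/B_{j-1}$ respectively, hence cyclic. This gives a principal series of $A\times B$ with cyclic factors, proving supersolvability. The induction step then follows by writing $G_1\times\cdots\times G_n = (G_1\times\cdots\times G_{n-1})\times G_n$.

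For part (2), the idea is to realize $G/\bigcap_{i=1}^n H_i$ as a subgroup of a direct product of supersolvable groups, and then invoke part (1) together with the fact (already proved above) that subgroups of supersolvable groups are supersolvable. Concretely, consider the homomorphism
\[
\varphi:G\longrightarrow G/H_1\times G/H_2\times\cdots\times G/H_n, \qquad g\longmapsto (gH_1,\dots,gH_n).
\]
Its kernel is exactly $\bigcap_{i=1}^{n}H_i$, so by the first isomorphism theorem $G/\bigcap_{i=1}^{n}H_i$ is isomorphic to a subgroup of $G/H_1\times\cdots\times G/H_n$. Each $G/H_i$ is supersolvable by hypothesis, so by (1) the direct product is supersolvable, and by the preceding theorem every subgroup of a supersolvable group is supersolvable. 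Hence $G/\bigcap_{i=1}^{n}H_i$ is supersolvable.

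The main obstacle, such as it is, lies in part (1): one must verify carefully that the intermediate subgroups $A_i\times 1$ and $A\times B_j$ are normal in the full product $A\times B$ and not merely in one factor. Once that normality check is in place, the cyclic-factor condition is immediate from the corresponding isomorphisms of quotients, and part (2) reduces to a routine application of the first isomorphism theorem combined with part (1) and the subgroup-closure property established earlier.
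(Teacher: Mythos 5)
Your proof is correct: the normality of $A_i\times 1$ and $A\times B_j$ in the full product $A\times B$ holds for exactly the reason you give, the concatenated chain has cyclic factors, and part (2) follows from the embedding $G/\bigcap_{i=1}^{n}H_i\hookrightarrow G/H_1\times\cdots\times G/H_n$ together with part (1) and subgroup closure. The paper itself offers no proof of this theorem, deferring to the cited references, and your argument is precisely the standard one found there, so there is nothing to reconcile.
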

We can summarize previous results as follows:
$$ cyclic  \subset   abelian \subset  nilpotent  \subset  solvable \subset  all\,\,groups$$
$$ cyclic  \subset   abelian \subset nilpotent \subset  supersolvable  \subset  solvable \subset  all\,\,groups$$

We recall that under special conditions some nilpotent groups are supersolvable groups. \\

 \textbf{Remark.} Given a class of groups $\mathfrak{X}$, the group $G$ is said to be a non-minimal group on $\mathfrak{X}$, or a critical group on $\mathfrak{X}$, if $G \notin \mathfrak{X}$ but all proper subgroups of $G$ are in $\mathfrak{X}$. The detailed knowledge of the structure of non-minimal groups in $\mathfrak{X}$ can provide insight into what makes a group a member of $\mathfrak{X}.$ All groups considered in this document are finite. See \cite{doerk2011,h2013I}.\\
 
We recall the importance work of Zarrin because he generalized the result of Schmidt for solvability of groups from its non-nilpotent subgroups.  Such result motivates a new criteria for solvability of groups through the number of their non-supersolvable and supersolvable subgroups. Zarrin introduced the class $\mathcal{S}^n$, that is, a group $G$ belongs to $\mathcal{S}^n$ if it contains exactly  $n$ non-nilpotent subgroups. Thus, Zarrin proved the following results in \cite{zarrin2012}.


\begin{theorem}
Let $G$ be a finite group that is an $\mathcal{S}^{n}$-group with $n\leq22$. Then
\begin{enumerate}
 \item $G$ is solvable,
\item $A_5$ is an $\mathcal{S}^{22}$-group. 
\end{enumerate}
\end{theorem}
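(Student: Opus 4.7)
The plan is to handle the two items separately, because together they identify $A_5$ as the sharp exceptional case: part (2) exhibits an $\mathcal{S}^{22}$-group that is \emph{not} solvable, so part (1) must be read as asserting solvability whenever $G \not\cong A_5$, in agreement with the refined statement discussed earlier in the introduction (the Zarrin/Ballester-Bolinches--Esteban-Romero--Lu classification).

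For part (2), I would enumerate the subgroups of $A_5$ by the classical Galois description. Since $|A_5| = 60 = 2^{2}\cdot 3 \cdot 5$, every subgroup of prime-power order is a $p$-group and hence nilpotent by the properties listed in Section 3.1. The remaining conjugacy classes of subgroups of $A_5$ are: ten copies of $S_3$ (order $6$), six copies of $D_{10}$ (order $10$), five copies of $A_4$ (order $12$), and $A_5$ itself. Each has composite non-prime-power order and is non-abelian with a non-normal Sylow subgroup, hence is non-nilpotent by the characterization of finite nilpotent groups as direct products of their Sylow subgroups. The total is $10+6+5+1 = 22$.

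For part (1), I would argue by contradiction: assume $G$ is a minimal non-solvable $\mathcal{S}^n$-group with $n \leq 22$ and $G \not\cong A_5$. Minimality forces every proper subgroup of $G$ to be solvable, so $G$ is either a minimal non-abelian simple group or a small extension of one. By Feit--Thompson (\cite{feit1963solvability}) $G$ admits a non-abelian simple composition factor $S$, and by the Thompson classification of minimal simple groups $S$ lies in a short list beginning $A_5$, $\mathrm{PSL}(2,7)$, $A_6$, $\mathrm{PSL}(2,8),\ldots$. I would then show, case by case, that every candidate $S \not\cong A_5$ already contains more than $22$ non-nilpotent subgroups (for instance $\mathrm{PSL}(2,7)$ of order $168$ has considerably more), and that any non-simple non-solvable extension such as $\mathrm{SL}(2,5)$ or $A_5 \times C_p$ contributes additional non-nilpotent subgroups pushing the total strictly above $22$.

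The main obstacle is this case analysis in part (1): one must produce sharp lower bounds on the number of non-nilpotent subgroups of each candidate non-abelian simple group, combining Sylow theory (Section 2.1), Lagrange's theorem, and detailed information about maximal subgroups. Following Zarrin's strategy in \cite{zarrin2012}, a useful reduction is to pass to $G/\Phi(G)$: non-nilpotent subgroups of the quotient lift to non-nilpotent subgroups of $G$, so it suffices to treat the Frattini-trivial case, where all maximal subgroups are complemented and the structure is largely controlled by the socle. This is the delicate step, since one must carefully distinguish $A_5$ from near-examples that share much of its subgroup arithmetic.
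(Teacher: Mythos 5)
First, a point of comparison: the paper does not actually prove this theorem --- it is quoted from Zarrin \cite{zarrin2012}, and the survey only gestures at the strategy (pass to $G/\Phi(G)$, reduce to a minimal simple group via Thompson's classification, and count non-nilpotent subgroups using Dickson's list of subgroups of $\mathrm{PSL}_2(q)$ together with the analogous list for $\mathrm{Sz}(q)$). Your overall plan is exactly that strategy, and your count in part (2) --- ten copies of $S_3$, six of $D_{10}$, five of $A_4$, plus $A_5$ itself, each non-nilpotent because some Sylow subgroup fails to be normal --- is correct and agrees with the paper's GAP computation ($59$ subgroups, $22$ of them non-nilpotent).

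The genuine gap is in your repair of part (1). You read (1) as ``$G$ is solvable unless $G\cong A_5$,'' so your case analysis needs every non-solvable candidate other than $A_5$ to have strictly more than $22$ non-nilpotent subgroups. That fails for $\mathrm{SL}_2(5)$: it is a minimal non-solvable group (all its maximal subgroups are solvable and $\mathrm{SL}_2(5)/\Phi(\mathrm{SL}_2(5))\cong A_5$), it is not isomorphic to $A_5$, and it has exactly $22$ non-nilpotent subgroups --- this is precisely Theorem A of \cite{BRL2017} quoted later in the paper, and it was the content of Zarrin's conjecture. So your step ``any non-simple non-solvable extension such as $\mathrm{SL}_2(5)$ \dots pushes the total strictly above $22$'' cannot be carried out. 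The correct reading of (1) is with $n<22$ (equivalently $n\le 21$); with that reading the minimal-counterexample argument does close, because every minimal simple group has at least $22$ non-nilpotent subgroups and these lift through the Frattini quotient. Two smaller slips: minimality of a counterexample gives you that all proper subgroups are solvable and hence that $G/\Phi(G)$ is a minimal simple group (Lemma \ref{Lemma 2.2}), not that $G$ itself is simple or nearly so; and $A_6$ does not belong on Thompson's list, since it contains $A_5$ and therefore is not minimal simple.
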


\begin{theorem}
Let $G$ be a semi-simple non-abelian finite group such that it is an $\mathcal{S}^n$-group with $n\leq65$. Then $G$ is isomorphic to $A_5, \operatorname{Sym}(5)$, or $\operatorname{SL}_2(7).$
\end{theorem}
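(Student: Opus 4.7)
The plan is to apply the standard structure theorem for finite semi-simple groups: since $G$ has trivial solvable radical, the socle $N = \operatorname{Soc}(G)$ is a direct product $T_1\times\cdots\times T_k$ of non-abelian finite simple groups, and conjugation on $N$ gives a faithful embedding $G\hookrightarrow\operatorname{Aut}(N)$. Counting non-nilpotent subgroups of $G$ then reduces, roughly, to controlling both the number of non-nilpotent subgroups already inside $N$ and the further contribution from the almost-simple extension $N\le G\le\operatorname{Aut}(N)$.

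First I would eliminate the case $k\ge 2$. By the previous theorem every non-abelian finite simple group carries at least $22$ non-nilpotent subgroups. If $N=T_1\times T_2$ with both factors non-abelian simple, then each product $H_1\times H_2$ with $H_1\le T_1$ non-nilpotent and $H_2\le T_2$ arbitrary is itself non-nilpotent, and these subgroups are pairwise distinct; a rough tally already produces many more than $65$ non-nilpotent subgroups of $N$, contradicting $G\in\mathcal{S}^n$ with $n\le 65$. Thus $N=T$ is a single non-abelian simple group.

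Next I would pin down $T$ using the classification of finite simple groups. The previous theorem forces $T=A_5$ whenever $T$ has at most $22$ non-nilpotent subgroups, and explicit subgroup counts (for instance by invoking GAP) on the next small candidates $\operatorname{PSL}_2(7)\cong\operatorname{SL}_3(2)$, $A_6$, $\operatorname{PSL}_2(8)$, $\operatorname{PSL}_2(11)$, $\operatorname{PSL}_2(13)$, and so on, yield exactly $65$ non-nilpotent subgroups for $\operatorname{PSL}_2(7)$ and strictly more for every other candidate. Simple groups of larger order can be dismissed more cheaply, for instance by counting Sylow normalisers, dihedral subgroups, and maximal non-nilpotent subgroups, which already exceed the bound. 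This leaves $T\in\{A_5,\operatorname{PSL}_2(7)\}$ (denoted $\operatorname{SL}_2(7)$ in the statement).

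Finally, for each admissible socle $T$, I would enumerate almost simple extensions $T\le G\le\operatorname{Aut}(T)$: for $T=A_5$ we have $\operatorname{Aut}(T)=S_5$ of index $2$ over $T$, giving $G\in\{A_5,S_5\}$, both of which satisfy the bound; for $T=\operatorname{PSL}_2(7)$ we have $\operatorname{Aut}(T)=\operatorname{PGL}_2(7)$, and a direct count shows $\operatorname{PGL}_2(7)$ possesses strictly more than $65$ non-nilpotent subgroups, so only the simple group itself remains. The main obstacle throughout is the delicate boundary arithmetic: verifying the exact count of $65$ in $\operatorname{PSL}_2(7)$, strictly exceeding $65$ for $\operatorname{PGL}_2(7)$, and controlling the counts in the other small simple groups — a finite but finicky subgroup-lattice computation that is most cleanly handled computationally.
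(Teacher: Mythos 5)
Your overall plan (reduce to simple groups, then count) is in the right spirit, but the structural reduction you open with is incompatible with the theorem's own conclusion, and that is a genuine gap rather than a stylistic difference. If ``semi-simple'' meant trivial solvable radical, so that $\operatorname{Soc}(G)$ is a direct product of non-abelian simple groups and $G\hookrightarrow\operatorname{Aut}(\operatorname{Soc}(G))$, then $\operatorname{SL}_2(7)$ could never survive your analysis: it has centre of order $2$, hence a non-trivial abelian (so solvable) normal subgroup; it is quasisimple but not simple and not almost simple; and its unique minimal normal subgroup is that central $C_2$, so it certainly does not embed in the automorphism group of its socle. Whatever notion of semi-simplicity makes the statement true must admit perfect central extensions such as $\operatorname{SL}_2(5)$ and $\operatorname{SL}_2(7)$, and the reduction has to pass through the central (equivalently, Frattini) quotient rather than the socle --- this is precisely the mechanism of Lemma \ref{Lemma 2.1} and Lemma \ref{Lemma 2.2} in the text (stated there for the supersolvable count, but used by Zarrin in the same way for the non-nilpotent count): the number of non-nilpotent subgroups can only drop when passing to $G/\Phi(G)$, and for $\operatorname{SL}_2(7)$ one has $\Phi(G)=Z(G)$ with $G/\Phi(G)\cong\operatorname{PSL}_2(7)$. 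Your final step, enumerating only the almost simple groups $T\le G\le\operatorname{Aut}(T)$, therefore misses exactly the case that produces $\operatorname{SL}_2(7)$ and would wrongly exclude it.

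The second gap is the step ``simple groups of larger order can be dismissed more cheaply,'' which as written is an assertion about infinitely many groups with no argument, patched by an open-ended appeal to GAP. The route the paper sketches (following Zarrin) makes this check genuinely finite: every non-abelian finite simple group contains a minimal simple group; the count of non-nilpotent subgroups is monotone under passing to subgroups; Thompson's theorem lists the minimal simple groups as $\operatorname{PSL}_2(p)$, $\operatorname{PSL}_2(2^q)$, $\operatorname{PSL}_2(3^q)$, $\operatorname{PSL}_3(3)$ and $\operatorname{Sz}(2^q)$; and Dickson's subgroup lists give the closed-form lower bounds recorded as properties (3) and (4) in the text (of the order of $(q^2-q+2)/2$ for $\operatorname{PSL}_2(q)$ and $(q^4+q^2+2)/2$ for $\operatorname{Sz}(q)$), which exceed $65$ outside a handful of small values of $q$ that are then inspected individually. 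That is what replaces your ``finicky subgroup-lattice computation'' with a five-family calculation. By contrast, your elimination of a socle with two or more simple factors is sound and matches property (1) of the paper, and your use of the $22$-subgroup lower bound for non-abelian simple groups is the correct input there.
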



We must take into account the following properties that satisfy the $\mathcal{S}^n$-groups introduced by Zarrin:\\

\begin{enumerate}
    \item Consider $G=H\times K\in \mathcal{S}^m$ and $K\in S^m$. Then $G\in \mathcal{S}^r$ with $mn\leq r.$

 \item Given a group $G$, and $K$ a non-nilpotent normal subgroup of $G$ such that $G/K\in S^n$ and $K\in \mathcal{S}^m$. Then $G\in \mathcal{S}^t$ for $t\geq m+n.$

 \item Let $\operatorname{SL}_2(q)\in \mathcal{S}^m, \ m\in \mathbb{N}$ and $q\geq 4$ be a power of $p$ prime. Then $\operatorname{SL}_2(q)$ has

\begin{equation*}
\begin{array}{ll}
q^{2}+1 & \text { if } q=2^{s} \\
\frac{q^{2}-q+2}{2} & \text { if } q=2^{f}+1 \\
\frac{q^{2}+q+2}{2} & \text { if } q=2^{f}-1 \\
q^{2}+1 & \text { if neither } q=2^{\mathrm{f}}+1 \text { nor } \mathrm{q}=2^{\mathrm{f}}-1
\end{array}
\end{equation*}
non-nilpotent subgroups ($\leq m$).
\item If $\operatorname{Sz}(q)$ with $q=2^{2m+1}, \ m>0$. Then has $\frac{q^4+q^2+2}{2}\leq n$ non-nilpotent subgroups.\\
\end{enumerate}

To see these results, note that the groups given in Thompson's classification have the subgroups described by Dickson.\\

We use the standard notation followed in Doerk and Awkes \cite{doerk2011} or Huppert \cite{h2013I}. We use $\operatorname{SL}_{m}(q)$ and $\operatorname{PSL}_{m}(q)$ to denote the special linear group and the special projective linear group, respectively, of dimension $m$ over the field with $q$ elements, where $q$ is a power of a prime. It can be seen that
$$
\left|\operatorname{PSL}_{2}\left(p^{f}\right)\right|={p^{f}\left(p^{f}-1\right)\left(p^{f}+1\right) \over \left(2, p^{f}-1\right)}
$$

{\bf The Suzuki group}. The group $\operatorname{Sz}(\mathbb{F}_q)=\operatorname{Sz}(q)$ is defined as the set of linear maps over the vector space $V$ which preserve the inner product and also the restricted outer product. It can be shown that $\operatorname{Sz}(q)$ acts 2-transitively on the set of points $q^{2}+1$, and that the two-point stabilizer has order $q-1$. Therefore
$$
|\operatorname{Sz}(q)|=(q^{2}+1) q^{2}\left(q-1\right) .
$$

We recall that a minimal simple group is a simple group whose maximal subgroups are solvable. In this way, Thompson in \cite{thompson1973nonsolvable} classified the minimal simple groups starting from the work with Walter Feit \cite{feit1963solvability}. As Thompson explained, this classification is a translation from solvable groups to simple groups. This allows us to know in a deep way the properties of solvable groups in the study of structures and solvable subgroups of a simple group. Briefly, Thompson opted for a classification of minimal simple groups for small orders. 

 \begin{theorem}[{Thompson \cite{thompson1973nonsolvable}}]
   A minimal simple groups are a the following groups: 
 
 \begin{enumerate}
\item $ \operatorname{P S L}_2 ( p) $, where $ p $ is a prime number with $ p> 3 $ y $ p ^ {2} -1 \equiv 0 \ (5) $.
\item $ \operatorname{P S L}_2 \left (2 ^ {q} \right) $, where $ q $ is a prime number.
\item $ \operatorname{P S L }_2\left(3 ^ {q} \right) $, where $ q $ is an odd prime number.
\item $ \operatorname{P S L}_3 (3) $.
\item The Suzuki groups $\operatorname{Sz}\left (2 ^ {q} \right)$, where $ q $ is an odd prime number. 
 
 \end{enumerate}
 \end{theorem}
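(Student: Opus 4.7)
The plan is to reduce the problem to Thompson's classification of N-groups and then invoke standard results on the subgroup structure of the relevant small linear groups. A \emph{minimal simple group} $G$ has, by definition, every maximal subgroup solvable; consequently every proper subgroup of $G$ is solvable, since each such subgroup is contained in some maximal subgroup and solvability is inherited by subgroups. In particular, for any nontrivial solvable subgroup $H \leq G$, its normalizer $N_G(H)$ is proper (else $G$ would have a nontrivial normal solvable subgroup, contradicting simplicity) and therefore solvable. Thus $G$ is an \emph{N-group}, and the first phase is to quote the machinery of Thompson's N-group theorem, which restricts the possible isomorphism types of nonabelian simple N-groups to the list in the statement plus a few larger groups that will be excluded by the minimality hypothesis.

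Next, I would eliminate the odd-order case using the Feit--Thompson theorem: if $|G|$ were odd, then $G$ itself would be solvable, contradicting that $G$ is simple and nonabelian. So $|G|$ is even, and the analysis proceeds through the Sylow $2$-subgroup $S \in \operatorname{Syl}_2(G)$ and its involution centralizers. In an N-group, the centralizer of an involution is solvable, so one can apply results on simple groups with prescribed involution centralizers (the Brauer--Suzuki--Wall theorems and the Gorenstein--Walter theorem on simple groups with dihedral Sylow $2$-subgroups). The dihedral case funnels us into the family $\operatorname{PSL}_2(q)$; the case of $\operatorname{PSL}_3(3)$ arises from a distinguished $2$-local configuration; and the Suzuki groups $\operatorname{Sz}(2^q)$ are detected by the presence of a Sylow $2$-subgroup of Suzuki type together with a Frobenius complement structure on the normalizer.

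The third phase is to pin down, inside each family, exactly which members satisfy the minimality condition. For this I would use Dickson's classical enumeration of subgroups of $\operatorname{PSL}_2(q)$: the subgroups are cyclic, dihedral, elementary abelian-by-cyclic (Borel-type), and the exceptional subgroups $A_4$, $S_4$, $A_5$, together with subfield subgroups $\operatorname{PSL}_2(q_0)$ and $\operatorname{PGL}_2(q_0)$. The group $\operatorname{PSL}_2(q)$ is minimal simple precisely when it contains no nonsolvable proper subgroup, i.e.\ no $A_5$ and no nonsolvable subfield subgroup. The condition ``no $A_5$'' translates via the divisibility of $|\operatorname{PSL}_2(q)|$ by $60$ into the congruence condition on $p$ in case~(1), while the condition ``no nonsolvable subfield subgroup'' forces $f$ to be prime in cases~(2) and~(3) (and forces $f$ odd in case~(3) to avoid the subfield $\operatorname{PSL}_2(3^2) \supseteq A_5$). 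An analogous Dickson-style subgroup classification for $\operatorname{Sz}(q)$ (due to Suzuki) yields case~(5), forcing $q = 2^q$ with $q$ an odd prime to avoid proper nonsolvable subfield subgroups; and a direct inspection of maximal subgroups of $\operatorname{PSL}_3(3)$ verifies case~(4).

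The main obstacle is of course the first phase: Thompson's N-group theorem is a deep, lengthy classification, and any honest proof must at least appeal to it as a black box. In a self-contained treatment one would need to develop the theory of strongly embedded subgroups, Thompson's transitivity and uniqueness theorems, and a substantial amount of character theory to handle the centralizer-of-involution analysis. The subsequent family-identification step via Dickson's theorem is comparatively mechanical, but the bookkeeping of which prime $p$ and which exponent $q$ survive is delicate and must be checked against the precise orders of the exceptional subgroups in each family.
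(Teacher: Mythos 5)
The paper does not actually prove this theorem: it is quoted from Thompson's N-groups papers with only a one-sentence gloss, so there is no internal argument to compare yours against. That said, your outline is the correct and standard route through the literature. The reduction ``minimal simple $\Rightarrow$ N-group'' via the normalizer argument is sound (for nontrivial solvable $H$, $N_G(H)=G$ would force a nontrivial normal solvable subgroup, contradicting nonabelian simplicity, so $N_G(H)$ is proper and hence solvable); the N-group theorem then restricts $G$ to $\operatorname{PSL}_2(q)$, $\operatorname{Sz}(2^n)$, $\operatorname{PSL}_3(3)$ and a few larger groups ($A_7$, $M_{11}$, $U_3(3)$, etc.) that are discarded because they contain nonsolvable proper subgroups; and Dickson's list (with Suzuki's analogue for $\operatorname{Sz}(q)$) correctly identifies the surviving parameters by excluding $A_5$ and nonsolvable subfield subgroups. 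As you concede, the entire weight of the argument sits in the N-group theorem, which can only be cited as a black box --- but that is exactly what the paper does, so your proposal supplies correct scaffolding around the same citation rather than a genuinely different proof.

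One substantive remark on the bookkeeping: your ``no $A_5$'' criterion, applied via Dickson's item on when $A_5 \leq \operatorname{PSL}_2(p^f)$ (namely $p=5$ or $p^{2f}-1\equiv 0 \pmod 5$), yields for case (1) the condition $p\equiv \pm 2 \pmod 5$, equivalently $p^2+1\equiv 0 \pmod 5$. The statement as printed above has $p^2-1\equiv 0\ (5)$, which is the condition for $A_5$ \emph{to be} a subgroup and is therefore the negation of the correct hypothesis; your reasoning is consistent with Thompson's original formulation rather than with the transcription in the theorem, and you should flag that discrepancy rather than silently match the printed congruence.
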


In \cite{dickson1959linear} Dickson et al investigated exhaustively the classical finite groups through linear groups. In particular, in \cite[\S 12]{dickson1959linear} the authors listed the subgroups of $\operatorname{PSL}_2(p^f)$. This work of Dickson et. al.  was based on the previous works of Moore and Wiman. For more details see \cite{dickson1899report}.
 
\begin{theorem}
[Dickson  \cite{h2013I,dickson1959linear,dickson1899report} ] The group $\operatorname{P S L}_2\left( p^{f}\right)$ has only the following subgroups
	\begin{enumerate}
		\item  Elementary $p$-abelian groups
\item  Cyclic Groups of order $z$ with $z | \frac{p^{f} \pm 1}{k}$, where $k=\operatorname{mcd}\left(p^{f}-1,2\right)$.
\item  Dihedral groups of order $2 z$ with $z$ such as in $(2)$.
	\item  Alternating Group $A_{4}$ for $p>2$ or $p=2$ and $f \equiv 0\ (2)$.
	\item  Symmetric Group $S_{4}$ for $p^{2 f}-1 \equiv 0\ (16)$.
\item  Alternating Group $A_{5}$ for $p=5$ or $p^{2 f}-1 \equiv 0 \ (5)$.
\item  Semidirect products of elementary abelian groups with order $p^{m}$ with cyclic groups of order $t$; where $t \mid p^{m}-1$ and $t |p^{f}-1$
\item  $\operatorname{P SL}_2\left( p^{m}\right)$ for $m|f$ and $\operatorname{P G L}_2\left( p^{m}\right)$ for $2 m | f$.	
\end{enumerate}

\end{theorem}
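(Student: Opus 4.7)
The plan is to classify subgroups $H \leq \operatorname{PSL}_2(q)$ with $q=p^f$ by exploiting the natural $2$-transitive action on the projective line $\mathbb{P}^1(\mathbb{F}_q)$, and by splitting the analysis according to whether $p$ divides $|H|$. First I would set up the standard structural ingredients: a Sylow $p$-subgroup $U$ is elementary abelian of order $q$ (the image of the unipotent upper-triangular matrices), a split maximal torus $T$ is cyclic of order $(q-1)/k$ where $k=\gcd(2,q-1)$, a non-split torus is cyclic of order $(q+1)/k$, the Borel subgroup $B=U\rtimes T$ is the full stabilizer of a point of $\mathbb{P}^1(\mathbb{F}_q)$, and the normalizer of a torus is dihedral of twice the order of the torus. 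These immediately account for items (1), (2), (3), and will be the scaffolding for items (7) and (8).

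The case $p \nmid |H|$ produces items (2), (3), and the exceptional items (4)--(6). Here $H$ acts semisimply on $\mathbb{F}_q^2$, and one shows that either $H$ fixes an unordered pair of points in $\mathbb{P}^1(\mathbb{F}_{q^2})$ (whence $H$ lies in the normalizer of a split or non-split torus, yielding cyclic or dihedral subgroups with the stated divisibility), or $H$ fixes no such pair. In the latter case a classical averaging argument realizes $H$ as a finite subgroup of $\operatorname{PGL}_2(\bar{\mathbb{F}}_p)$ acting without fixed points on the projective line over the algebraic closure; Klein's list forces $H \in \{A_4, S_4, A_5\}$. Existence inside $\operatorname{PSL}_2(q)$ is then controlled by when the requisite orders of elements ($3$ and $2$ for $A_4$; additionally $4$ for $S_4$; additionally $5$ for $A_5$) appear as divisors of $\frac{q\pm 1}{k}$. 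A short computation in $\mathbb{F}_{q^2}^*$ translates these order conditions into the congruences in (4), (5), (6).

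The case $p \mid |H|$ produces items (1), (7), and (8). Let $U_H$ be a Sylow $p$-subgroup of $H$. If $U_H$ is normal in $H$, then $H$ normalizes a unique $1$-dimensional subspace, so $H \leq B$, and the quotient $H/U_H$ embeds into the cyclic torus $T$; this gives either $H=U_H$ (an elementary abelian $p$-group as in (1)) or the semidirect product in (7), with the divisibility $t \mid p^m-1$ coming from the action of $T$ on $U_H$ and $t\mid p^f-1$ from $H/U_H \leq T$. If $U_H$ is not normal, $H$ contains at least two distinct Sylow $p$-subgroups, and the key lemma is that the subgroup generated by two unipotent subgroups of $\operatorname{PSL}_2(\bar{\mathbb{F}}_p)$ fixing different points is always of the form $\operatorname{PSL}_2(\mathbb{F})$ for $\mathbb{F}$ the subfield generated by the matrix entries. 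This forces $H \supseteq \operatorname{PSL}_2(p^m)$ for some $m \mid f$, and the only overgroup inside $\operatorname{PSL}_2(p^f)$ is $\operatorname{PGL}_2(p^m)$, which requires $2m \mid f$ to have determinants realized by squares in $\mathbb{F}_{p^f}^*$, yielding (8).

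The hardest step will be the exceptional analysis in the $p\nmid|H|$ case: showing that the only fixed-point-free finite subgroups of $\operatorname{PGL}_2(\bar{\mathbb{F}}_p)$ are $A_4$, $S_4$, $A_5$, and pinning down the exact congruences in (4)--(6) on $p^f$ for their embeddings, requires careful arithmetic in $\mathbb{F}_{q^2}^*$ combined with the $\operatorname{SO}(3)$-style classification (in positive characteristic one must also be careful when $p \in \{2,3,5\}$, where some of these groups collapse onto subfield subgroups and must be counted under item (8) rather than separately). A secondary difficulty is the generation lemma for the non-normal-unipotent case, whose cleanest proof uses the $(B,N)$-pair axioms but can also be done by explicit matrix computation with commutators of root subgroups.
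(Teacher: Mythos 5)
This theorem is stated in the paper without proof: it is quoted as Dickson's classical classification and attributed to Huppert \cite{h2013I} and Dickson \cite{dickson1959linear,dickson1899report}, so there is no in-paper argument to compare yours against. Judged on its own terms, your outline follows the standard textbook proof (essentially Huppert, Kapitel II, \S 8): split on whether $p$ divides $|H|$, use the Borel subgroup and the two tori with their dihedral normalizers to get items (1)--(3) and (7), invoke the Klein-type classification for the $p'$-subgroups stabilizing no point and no pair of points to get (4)--(6), and reduce the non-normal-Sylow case to subfield subgroups for (8). That is the right skeleton and the case division is sound.

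Two places in your sketch are looser than they can afford to be. First, the generation lemma in the last case is not correct as stated: the subgroup generated by two unipotent subgroups fixing different points need not be all of $\operatorname{PSL}_2$ of ``the subfield generated by the matrix entries'' --- the actual Dickson lemma concerns the group generated by two specific opposite transvections $\left(\begin{smallmatrix}1&1\\0&1\end{smallmatrix}\right)$ and $\left(\begin{smallmatrix}1&0\\c&1\end{smallmatrix}\right)$, has exceptions for small $p$, and even then only gives you a subfield copy of $\operatorname{SL}_2$ sitting inside $H$; identifying $H$ itself as $\operatorname{PSL}_2(p^m)$ or $\operatorname{PGL}_2(p^m)$ requires a further argument about overgroups of $\operatorname{PSL}_2(p^m)$ generated by it together with semisimple elements, and this is where the condition $2m\mid f$ actually comes from. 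Second, ``fixed-point-free'' is the wrong phrase for the exceptional case: every element of $\operatorname{PGL}_2$ over an algebraically closed field fixes a point of the projective line; what you need is that the \emph{group} stabilizes neither a point nor an unordered pair of points. With those two repairs the plan is the classical one and would go through, though writing out the arithmetic for the congruences in (4)--(6) and the small-characteristic collapses ($A_4\cong\operatorname{PSL}_2(3)$, $A_5\cong\operatorname{PSL}_2(4)\cong\operatorname{PSL}_2(5)$, $S_4\cong\operatorname{PGL}_2(3)$) is a substantial amount of work that the sketch only gestures at.
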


Based on the results of Zarrin in \cite{zarrin2012}, Zarrin himself obtained that the only simple groups with exactly 22 non-nilpotent subgroups is $A_5$ and that the only simple group with exactly 65 non-nilpotent subgroups is $\operatorname{SL}_2 (7)$. A corollary to these facts is that if a non-solvable group $G$ is an $S^{n}$-group with $n\leq 22$, then $G/\Phi(G)$ is isomorphic to $A_5$. Zarrin in \cite{zarrin2012} conjectured that a finite non-solvable group that is a $S^{22}$-group is isomorphic to $A_5$ or $\operatorname{SL}_2(7)$. This conjecture was proven by Ballester-Bolinches, Esteban-Romero and Lu in 2017, see \cite[Theorem A]{BRL2017} for further details. The following theorem corresponds to the proof of the mentioned conjecture.

\begin{theorem}[See \cite{BRL2017}]
 Let $G$ be an insoluble group. Then $G$ has exactly 22 non-nilpotent subgroups if and only if it is isomorphic to $A_{5}$ or $\operatorname{SL}_{2}(5)$.
\end{theorem}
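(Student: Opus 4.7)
I would prove the two directions separately. For sufficiency ($\Leftarrow$) I enumerate the subgroup lattices of $A_5$ and $\operatorname{SL}_2(5)$ directly and verify the count $22$ in each. For necessity ($\Rightarrow$) I lean on the corollary to Zarrin's theorem quoted above in the excerpt: any insoluble $\mathcal{S}^{n}$-group with $n\le 22$ satisfies $G/\Phi(G)\cong A_5$. Once this reduction is in place, the problem becomes the classification of those Frattini extensions of $A_5$ whose non-nilpotent subgroup count remains exactly $22$.

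\textbf{Sufficiency by direct counting.} For $A_5$ the standard enumeration yields $59$ subgroups in total; the cyclic subgroups together with the $5$ Klein four subgroups are nilpotent, and the remainder consists of $10$ copies of $S_3$, $5$ of $A_4$, $6$ of $D_{10}$, and $A_5$ itself, giving exactly $10+5+6+1=22$ non-nilpotent subgroups. For $\operatorname{SL}_2(5)$ the key observation is that $-I$ is its unique involution, so every subgroup of even order contains $Z=\{\pm I\}$, and subgroups containing $Z$ correspond bijectively to subgroups of $\operatorname{SL}_2(5)/Z\cong A_5$, while the odd-order subgroups are exactly $\{I\}$ together with the $10$ Sylow $3$- and the $6$ Sylow $5$-subgroups (all cyclic, hence nilpotent). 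For a non-nilpotent $K\le A_5$ of type $S_3$, $A_4$, $D_{10}$, or $A_5$, the preimage in $\operatorname{SL}_2(5)$ is a central extension of $K$ by $\mathbb{Z}_2$ in which $Z$ is central; since each such $K$ has trivial centre, the preimage must be the unique non-split double cover (dicyclic of order $12$, binary tetrahedral, dicyclic of order $20$, or $\operatorname{SL}_2(5)$ itself), each non-nilpotent. The nilpotent subgroups of $A_5$ lift to cyclic groups and to $Q_8$, hence remain nilpotent. The total is once more $10+5+6+1=22$.

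\textbf{Necessity via the Frattini subgroup.} Now let $G$ be insoluble with exactly $22$ non-nilpotent subgroups. Zarrin's corollary gives $G/\Phi(G)\cong A_5$, so $|G|=60\,|\Phi(G)|$. By the theorem stated earlier, $\Phi(G)$ is nilpotent, hence $\Phi(G)=\prod_p O_p(\Phi(G))$. The correspondence theorem provides a bijection between subgroups of $G$ containing $\Phi(G)$ and subgroups of $A_5$; since quotients of nilpotent groups are nilpotent, each of the $22$ non-nilpotent subgroups of $A_5$ lifts to a non-nilpotent subgroup of $G$ containing $\Phi(G)$. The count $22$ is therefore saturated: no subgroup $H$ of $G$ with $H/\Phi(G)$ nilpotent may itself be non-nilpotent, and no subgroup of $G$ failing to contain $\Phi(G)$ may be non-nilpotent either. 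I would use these two constraints, together with coprime action arguments, to show that any prime $p$ dividing $|\Phi(G)|$ other than perhaps $p=2$ would allow the Sylow $3$- or $5$-subgroups of $G$ to act non-trivially on a $p$-section of $\Phi(G)$, producing fresh non-nilpotent subgroups outside the $22$ already accounted for; a refined version of the same argument bounds the $2$-part to at most order $2$. This forces $\Phi(G)\in\{1,\mathbb{Z}_2\}$.

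\textbf{Finishing and main obstacle.} If $\Phi(G)=1$, then $G\cong A_5$. If $|\Phi(G)|=2$, then $G$ is a central extension of $A_5$ by $\mathbb{Z}_2$; the split extension $A_5\times\mathbb{Z}_2$ is excluded because $A_5\times\{1\}$ is a maximal subgroup which does not contain the $\mathbb{Z}_2$ factor, so $\Phi(A_5\times\mathbb{Z}_2)=\Phi(A_5)\times\{1\}=1$ rather than order $2$. Since the Schur multiplier of $A_5$ is cyclic of order $2$, the only non-split central extension of $A_5$ by $\mathbb{Z}_2$ is $\operatorname{SL}_2(5)$, identifying the second isomorphism class. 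The hard step, and the one where the published proof does its heaviest lifting, is the Frattini bound $|\Phi(G)|\le 2$: the non-nilpotent count is sharp, so every potentially new non-nilpotent subgroup of $G$ produced by a nontrivial $p$-part of $\Phi(G)$ must be ruled out by a delicate combined argument on coprime actions and on nilpotent-by-nilpotent extensions, with no slack to absorb even a single extra subgroup.
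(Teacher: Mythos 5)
The paper does not actually prove this theorem: it is quoted from Ballester-Bolinches, Esteban-Romero and Lu \cite{BRL2017}, and the survey only records that the proof there runs through a minimal counterexample analysed modulo the Frattini subgroup. Measured against that, your sufficiency direction is complete and correct: the count $10+5+6+1=22$ in $A_5$ is right, and the transfer to $\operatorname{SL}_2(5)$ via the unique involution $-I$ (odd-order subgroups are cyclic; even-order subgroups contain $Z=\{\pm I\}$ and correspond to subgroups of $A_5$, with non-nilpotency preserved in both directions along the central quotient) is sound.

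The necessity direction has a genuine gap at exactly the step you flag as hard, namely the bound $\Phi(G)\in\{1,\mathbb{Z}_2\}$. The two saturation constraints you extract are correct, but the passage from them to the bound is only a promissory note (``coprime action arguments''), and as phrased the plan fails in the critical case. If $p$ divides $|\Phi(G)|$ and $P=O_p(\Phi(G))$, then a \emph{non-trivial} action of some Sylow $q$-subgroup $Q$ ($q\neq p$) on $P$ does produce a non-nilpotent subgroup $PQ$ outside your list of $22$, since $PQ$ either fails to contain $\Phi(G)$ or maps onto the nilpotent $q$-group $Q\Phi(G)/\Phi(G)$ of $A_5$; that half is fine. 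But nothing forces the action to be non-trivial: the dangerous case is when every such $Q$ centralizes $P$, whence (as these Sylow subgroups generate $G$ modulo $\Phi(G)$ and Frattini elements are non-generators) $P\le Z(G)$, and then \emph{no} new non-nilpotent subgroup appears, so the saturation constraints give you nothing. To eliminate a central odd part one must instead use that $G'\Phi(G)=G$ forces $G$ perfect, so that $Z(G)=Z(G)\cap G'$ is a homomorphic image of the Schur multiplier of $G/Z(G)$, and then one still has to control that multiplier when $Z(G)$ is a proper subgroup of $\Phi(G)$. In other words, the Schur-multiplier argument you reserve for the final paragraph is needed already in the middle of the proof, in a more delicate form, and until it is carried out the classification is not established.
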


Earlier, Huppert \cite{huppert1954} proved that the word nilpotent in Schmidt's theorem can be replaced by supersolvable obtaining the same conclusion mutatis mutandis. 


\subsection{Groups with non-supersolvable subgroups}
One natural question that arise is about the minimum number of non-supersolvable subgroups to guarantee the solvability of a group $G$. The results of the paper \cite{BRL2017} were motivated by the paper \cite{zarrin2012} where they shown that { the only minimal simple group with exactly 6 non-supersolvable subgroups is $A_5$ and that an unsolvable group $G$ has exactly 6 non-supersolvable subgroups if and only if it is isomorphic to $A_{5}$ or $\operatorname{SL}_{2}(5)$}. The following results can be found in \cite{BRL2017}.

	\begin{lemma}\label{Lemma 2.1}
		Let $G$ be a group. The number of non supersolvable subgroups of $G / \Phi(G)$ is not bigger than the number of supersolvables subgroups of $G$. \medskip \
	\end{lemma}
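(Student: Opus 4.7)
The plan is to combine the Correspondence Theorem, quotient-closure of supersolvability, and the nilpotence of $\Phi(G)$ to construct an injective map from the non-supersolvable subgroups of $G/\Phi(G)$ into the supersolvable subgroups of $G$. First I would set $N = \Phi(G)$ and apply the Correspondence Theorem, so that subgroups of $G/N$ are precisely the quotients $H/N$ with $N \le H \le G$. If $\bar H = H/N$ is a non-supersolvable subgroup of $G/N$, then $H$ itself must be non-supersolvable: by the theorem stated earlier asserting that homomorphic images of supersolvable groups are supersolvable, if $H$ were supersolvable, then $H/N$ would be too, contradicting the hypothesis. Moreover $H \neq N$, since $N = \Phi(G)$ is nilpotent (by the Frattini-nilpotence theorem) and hence supersolvable, so $N$ itself is excluded from the list of preimages.

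Next I would define the candidate injection $\Psi$ from non-supersolvable subgroups of $G/N$ into supersolvable subgroups of $G$ by $\Psi(\bar H) := \Phi(H)$, the Frattini subgroup of the lifted $H$. Since $\Phi(H)$ is nilpotent by the Frattini-nilpotence theorem and every nilpotent group is supersolvable, $\Psi(\bar H)$ indeed lies in the target set. An equally natural alternative is to map $\bar H$ to a canonically chosen maximal supersolvable subgroup of $H$ containing $N$; such a subgroup exists because $N$ is itself supersolvable (being nilpotent) and $H$ is finite, so any chain of supersolvable subgroups of $H$ above $N$ admits a maximal element.

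The hard part will be establishing injectivity of $\Psi$. Distinct non-supersolvable $H_1 \neq H_2$ containing $N$ need not a priori have different Frattini subgroups, so the argument demands a careful lattice-theoretic analysis inside $G$. Concretely, if $\Phi(H_1) = \Phi(H_2) = K$, I would use that $K$ is the intersection of maximal subgroups of each $H_i$ together with the fact that both $H_i/K$ are non-supersolvable (since supersolvability would lift back through the Frattini quotient via Huppert's classical characterization), in order to force the overgroup of $K$ in $G$ to be uniquely determined and conclude $H_1 = H_2$. Should the Frattini route prove too delicate in pathological cases, the fallback is to replace $\Psi$ by a selection of distinguished maximal supersolvable subgroups above $N$ and to reconstruct $H$ from such a subgroup via a minimality argument. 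In either approach the correspondence and quotient-closure steps are routine, and the technical crux is the bookkeeping needed to show that two distinct non-supersolvable overgroups of $\Phi(G)$ cannot produce the same supersolvable witness.
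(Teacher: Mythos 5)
Your first paragraph already contains, in essence, the complete proof of this lemma as it is proved in the source \cite{BRL2017} that this survey cites: by the correspondence theorem, subgroups of $G/\Phi(G)$ are exactly the $H/\Phi(G)$ with $\Phi(G)\le H\le G$, and since supersolvability is inherited by quotients, any $H$ with $H/\Phi(G)$ non-supersolvable is itself non-supersolvable; hence $H/\Phi(G)\mapsto H$ is an injection from the non-supersolvable subgroups of $G/\Phi(G)$ into the \emph{non-supersolvable} subgroups of $G$. That is what the lemma is actually asserting: the word ``supersolvables'' in the statement is a typographical slip for ``non-supersolvable,'' as one sees by comparing with Lemma~\ref{Lemalu 2.1} (which the paper explicitly describes as equivalent, with a similar proof) and with the role the lemma plays in the proof of Theorem~\ref{teoBC}. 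Had you stopped after your first paragraph and drawn this conclusion, you would have reproduced the intended argument exactly.

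Everything after that chases the literal (mis-stated) target, and there the gap you yourself flag is fatal: injectivity of $\Psi(\bar H)=\Phi(H)$ is never established, and the map is in fact not injective. Take $G=A_5$, so that $\Phi(G)=1$ and $G/\Phi(G)=G$. By the paper's own count, the non-supersolvable subgroups are the five subgroups isomorphic to $A_4$ together with $A_5$ itself, and every one of them has trivial Frattini subgroup: $\Phi(A_4)=1$ because the maximal subgroups of $A_4$ (the Klein four-group and the four subgroups of order $3$) intersect trivially, and $\Phi(A_5)=1$ because $A_5$ is simple and $\Phi(A_5)$ is a proper normal subgroup. So $\Psi$ collapses all six non-supersolvable subgroups to the identity subgroup. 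Your proposed repair --- arguing that $K=\Phi(H_1)=\Phi(H_2)$ determines its non-supersolvable overgroup uniquely --- cannot work, since here $K=1$ has six such overgroups; and the fallback of selecting a maximal supersolvable subgroup above $\Phi(G)$ inside each lift comes with no injectivity argument either (nothing prevents distinct lifts from sharing their supersolvable witnesses). So, as written, the proposal does not prove the statement it set out to prove; with the corrected reading of the statement, your correspondence-theorem paragraph alone is a complete proof and coincides with the argument of \cite{BRL2017}.
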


In the following lemma it is related $G/\Phi(G)$ with minimal simple groups. Remember that $\Phi(G)$ is the Frattini subgroup of $G$.

	\begin{lemma}\label{Lemma 2.2}
		Let $G$ be an non solvable group whose maximal subgroups are solvable. Then $G / \Phi(G)$ is a minimal simple group.
	\end{lemma}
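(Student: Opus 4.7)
Set $\bar G := G/\Phi(G)$. The plan is to establish three things about $\bar G$, in order of increasing difficulty: (i) $\bar G$ is non-solvable, (ii) every maximal subgroup of $\bar G$ is solvable, and (iii) $\bar G$ is simple. Together these assert exactly that $\bar G$ is a minimal simple group.

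Step (i) and (ii) are quick and I would dispatch them first. For (i): the earlier theorem states that $\Phi(G)$ is nilpotent, hence solvable, so if $\bar G$ were solvable then by the extension property of solvable groups (property 2 in the list given in the paper) $G$ would be solvable, contradicting the hypothesis. For (ii): $\Phi(G)$ is by definition contained in every maximal subgroup of $G$, so the correspondence theorem identifies the maximal subgroups of $\bar G$ with the subgroups $M/\Phi(G)$, where $M$ ranges over the maximal subgroups of $G$; each such $M$ is solvable by hypothesis, and solvability passes to quotients.

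The heart of the argument is step (iii). Suppose, aiming at contradiction, that $\bar N$ is a minimal normal subgroup of $\bar G$ with $\bar N \neq \bar G$. In a finite group every proper subgroup is contained in a maximal subgroup, so $\bar N \leq \bar M_{0}$ for some maximal $\bar M_{0}$ of $\bar G$; by (ii), $\bar M_{0}$ is solvable, so $\bar N$ is solvable, and a minimal normal solvable subgroup is necessarily elementary abelian. Next, invoke the standard identity $\Phi(\bar G) = \Phi(G/\Phi(G)) = 1$, which is immediate from the correspondence of maximal subgroups used in (ii). Since the intersection of all maximal subgroups of $\bar G$ is trivial and $\bar N \neq 1$, there must exist a maximal subgroup $\bar M$ of $\bar G$ with $\bar N \not\leq \bar M$; maximality then forces $\bar G = \bar N\bar M$. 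Now $\bar N \cap \bar M$ is normalized by $\bar M$ (because $\bar N \triangleleft \bar G$) and by $\bar N$ (because $\bar N$ is abelian), hence is normal in $\bar G = \bar N\bar M$; it is a proper subgroup of $\bar N$ by the choice of $\bar M$, so by minimality of $\bar N$ we get $\bar N \cap \bar M = 1$, and therefore $\bar G = \bar N \rtimes \bar M$. But $\bar N$ is abelian and $\bar M$ is solvable by (ii), so $\bar G$ is solvable, contradicting (i). Hence no such $\bar N$ exists and $\bar G$ is simple.

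The main obstacle is step (iii): one has to convert the hypothesis on maximal subgroups of $G$ into a complement for a would-be minimal normal subgroup of $\bar G$. The decisive ingredient is $\Phi(\bar G) = 1$, which is what produces a maximal subgroup $\bar M$ avoiding $\bar N$ and thereby delivers the semidirect decomposition; without this, the argument reducing $\bar G$ to a solvable-by-abelian extension collapses.
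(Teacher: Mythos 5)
Your proof is correct and complete: steps (i) and (ii) are routine, and the key step (iii) --- using $\Phi(G/\Phi(G))=1$ to find a maximal subgroup complementing a would-be proper minimal normal subgroup $\bar N$, which is abelian because it lies in a solvable maximal subgroup, and then deriving solvability of $\bar G$ from the extension $\bar N \rtimes \bar M$ --- is exactly the standard Frattini-quotient argument for this classical fact. Note that the paper itself gives no proof of this lemma, deferring entirely to the reference \cite{BRL2017}; your write-up supplies the missing argument and matches the one found there in substance, so there is nothing to flag.
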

	
A consequence of the simple finite groups classification, according to Thompson, is that all non-abelian simple group contains a minimal simple subgroup.
	
	\begin{lemma}\label{Lemma 2.3}
The number of non-supersolvable subgroups of a minimal simple group is at least $ 6. $ The only minimal simple group with exactly 6 non-solvable subgroups is $ A_ {5} $.
	\end{lemma}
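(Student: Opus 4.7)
My plan is to combine Thompson's classification of minimal simple groups with Dickson's description of the subgroups of $\mathrm{PSL}_2(p^f)$ (both recalled above), together with the known subgroup structure of $\mathrm{PSL}_3(3)$ and the Suzuki groups. For each minimal simple group in Thompson's list I would identify the non-supersolvable subgroups among the Dickson types and then count conjugates.

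The first step is to sort the items in Dickson's list into supersolvable and non-supersolvable. Using the characterization that a finite group is supersolvable if and only if every chief factor has prime order, items (1)--(3) of Dickson (elementary abelian of rank $\le 1$, cyclic, and dihedral subgroups) are supersolvable; item (7), $C_p^m \rtimes C_t$, is supersolvable if and only if $m \le 1$ (otherwise $C_p^m$ is a non-cyclic minimal normal subgroup); and items (4)--(6) and (8) contribute the ``exceptional'' non-supersolvable types $A_4$, $S_4$, $A_5$, $\mathrm{PSL}_2(p^m)$ with $m>1$, and $\mathrm{PGL}_2(p^m)$.

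For $A_5 \cong \mathrm{PSL}_2(5)$ I would enumerate the subgroups up to conjugacy (cyclic of orders $1,2,3,5$; $V_4$; $S_3$; $D_{10}$; $A_4$; $A_5$) with conjugacy class sizes $1,15,10,6,5,10,6,5,1$. Every class other than $A_4$ and $A_5$ is cyclic or has a cyclic normal subgroup of prime index, hence is supersolvable; $A_4$ fails because its only nontrivial proper normal subgroup $V_4$ is non-cyclic. This yields exactly $5+1=6$ non-supersolvable subgroups, so $A_5$ attains the bound. For every other minimal simple group in Thompson's list I would then show the count strictly exceeds $6$. In $\mathrm{PSL}_2(p)$ with $p>5$ prime and $5 \mid p^2-1$, both a class of $A_4$'s (Dickson (4)) and a class of $A_5$'s (Dickson (6)) appear, with $|G|/12$ or $|G|/24$ and $|G|/60$ conjugates respectively, each already comfortably exceeding $6$. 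For $\mathrm{PSL}_2(2^q)$ and $\mathrm{PSL}_2(3^q)$ with $q$ an odd prime, items (4)--(6) contribute nothing, but the Borel subgroup $C_p^q \rtimes C_t$ has non-cyclic socle $C_p^q$ of rank $q \ge 3$, hence is non-supersolvable, and it has $p^q+1 \ge 9$ conjugates. For $\mathrm{PSL}_3(3)$ the two classes of parabolic maximal subgroups (point and line stabilizers) and the copies of $S_4$ easily produce more than $6$ non-supersolvable subgroups. For $\mathrm{Sz}(2^q)$ with $q$ an odd prime, the non-abelian Sylow $2$-subgroup is non-supersolvable and has $2^{2q}+1 \ge 65$ conjugates.

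The main obstacle will be the families $\mathrm{PSL}_2(2^q)$ and $\mathrm{PSL}_2(3^q)$ with $q$ an odd prime, where Dickson's ``small exceptional'' items (4)--(6) contribute nothing and one must argue solely via item (7); here verifying that $m \geq 2$ forces non-supersolvability (rather than producing, say, a supersolvable metacyclic extension) and pinning down the size of the conjugacy class is the delicate point. A parallel careful analysis is required for the Suzuki family, where one must also rule out an accidental supersolvable structure on the Frobenius complements. Once these cases are cleared, the lemma reduces to the explicit calculation for $A_5$ above, together with the straightforward lower bounds in the remaining cases.
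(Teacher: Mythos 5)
Your overall strategy---running through Thompson's list of minimal simple groups and discriminating the Dickson subgroup types into supersolvable and non-supersolvable, then counting conjugates---is exactly the route the paper indicates (it defers the details to Ballester-Bolinches, Esteban-Romero and Lu), and your count of $5+1=6$ for $A_5$ is correct. However, there is one genuine error in your case analysis: for $\operatorname{Sz}(2^q)$ you claim that ``the non-abelian Sylow $2$-subgroup is non-supersolvable.'' This is false. Every finite $p$-group is nilpotent and every finite nilpotent group is supersolvable (refine the upper central series to one with factors of prime order, each term normal in the whole group); the paper itself records that groups of prime-power order are supersolvable. Non-abelianness is irrelevant here. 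The Suzuki case is not lost, but the witness must be the Borel subgroup $Q\rtimes C_{q-1}$ (the normalizer of the Sylow $2$-subgroup $Q$), whose unique minimal normal subgroup is $Z(Q)\cong C_2^{2m+1}$ with $C_{q-1}$ acting transitively on its nonidentity elements, hence non-cyclic; this subgroup has $q^2+1$ conjugates and the bound follows.

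Two smaller points. First, your blanket claim that a Dickson type (7) subgroup $C_p^m\rtimes C_t$ is supersolvable if and only if $m\le 1$ is not true in general: if the $C_t$-action is reducible, $C_p^m$ need not be a minimal normal subgroup and the extension can be supersolvable. You do flag this as ``the delicate point,'' and for the Borel subgroups you actually need (the full torus acts as a Singer-type cycle, irreducibly over the prime field) the argument goes through; but the early ``if and only if'' should be weakened to the irreducible case. Second, in the $\operatorname{PSL}_2(p)$ case a minimal simple group cannot contain $A_5$ as a proper subgroup, so no ``class of $A_5$'s'' appears there; the correct Thompson condition is $5\mid p^2+1$ (the paper's own statement of Thompson's theorem contains the sign error you inherited), and by Dickson item (6) this excludes $A_5$-subgroups. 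Your conclusion is unaffected since the $|G|/12\ge 14$ conjugates of $A_4$ already exceed $6$, but the sentence as written is inaccurate.
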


The proof of this lemma is obtained through Thompson classification on simple minimal groups together with Dickson classification. We proceed by making a discrimination of the subgroups of each minimal simple group to which $G$ can be isomorphic of the Thompson classification. See \cite[Kapitel II, Bemerkung 7.5]{h2013I} and \cite[Kapitel II, Hilfssatz 6.2]{h2013I}. Detailed proofs of these lemmas can be found in \cite{BRL2017}. In this way, with these lemmas Ballester-Bolinches, Esteban-Romero and Lu in \cite{BRL2017} proven the following theorem.

	\begin{theorem}[Theorems B and C in \cite{BRL2017}]\label{teoBC}
	The following statements hold.
		\begin{enumerate}
			\item A group with less than $6$ non-supersolvable subgroups is solvable.
			\item Let $G$ be an unsolvable group. Then $G$ has exactly 6 non-supersolvable subgroups if and only if it is isomorphic to $A_{5}$ or $S L_{2}(5)$.
		\end{enumerate}
	\end{theorem}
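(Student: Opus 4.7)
The plan is to prove (1) and (2) in that order, using Lemmas \ref{Lemma 2.1}--\ref{Lemma 2.3} as the engine. For (1), I would argue by contradiction: let $G$ be a counterexample of minimal order, so $G$ is unsolvable yet has at most $5$ non-supersolvable subgroups. The first step is to show every maximal subgroup of $G$ is solvable. If some maximal $M<G$ were unsolvable, then by minimality of $|G|$ the group $M$ would already contain at least $6$ non-supersolvable subgroups; all of these are subgroups of $G$, contradicting the bound. Hence all maximal subgroups of $G$ are solvable, and Lemma \ref{Lemma 2.2} applies: $G/\Phi(G)$ is a minimal simple group. By Lemma \ref{Lemma 2.1}, $G/\Phi(G)$ has at most $5$ non-supersolvable subgroups, contradicting Lemma \ref{Lemma 2.3}. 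This proves (1).

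For the ``only if'' direction of (2), suppose $G$ is unsolvable with exactly $6$ non-supersolvable subgroups. If some proper subgroup $H<G$ were unsolvable, then by (1) applied to $H$ we would have at least $6$ non-supersolvable subgroups inside $H$; since $G$ itself is unsolvable (hence non-supersolvable) but not among those subgroups of $H$, we would get at least $7$ non-supersolvable subgroups of $G$, a contradiction. Therefore every maximal subgroup of $G$ is solvable. Lemma \ref{Lemma 2.2} gives that $G/\Phi(G)$ is a minimal simple group, and Lemmas \ref{Lemma 2.1} and \ref{Lemma 2.3} force the number of non-supersolvable subgroups of $G/\Phi(G)$ to be exactly $6$ with $G/\Phi(G)\cong A_5$.

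The hard part will be the lift from $G/\Phi(G)\cong A_5$ to $G\in\{A_5,\operatorname{SL}_2(5)\}$. Here I would exploit three facts simultaneously: (i) $\Phi(G)$ is nilpotent, (ii) $A_5$ is perfect with Schur multiplier $\mathbb{Z}/2$ (so any central perfect extension of $A_5$ is either $A_5$ itself or $\operatorname{SL}_2(5)$), and (iii) the number of non-supersolvable subgroups of $G$ equals that of $G/\Phi(G)$, which tightly restricts $\Phi(G)$. The strategy is to show $\Phi(G)$ is central in $G$ (using that maximal subgroups supplement $\Phi(G)$ and that the induced action of $A_5$ on $\Phi(G)/[\Phi(G),\Phi(G)]$ must be trivial because any nontrivial irreducible $A_5$-module would produce additional non-supersolvable subgroups of the form preimages or semidirect factors) and then bound $|\Phi(G)|$ by examining the pull-backs of the six non-supersolvable subgroups of $A_5$; any prime $p$ dividing $|\Phi(G)|$ other than $2$ would create extra non-supersolvable subgroups, and the $2$-part is pinned down by the Schur multiplier. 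This reduces $\Phi(G)$ to being either trivial or the center of $\operatorname{SL}_2(5)$, giving the two listed groups.

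Finally, the ``if'' direction is a direct verification: enumerate the subgroups of $A_5$ (using Dickson's classification) and of $\operatorname{SL}_2(5)$, and in each case count those that are not supersolvable; one checks in $A_5$ that precisely the group itself and its five conjugate subgroups isomorphic to $A_4$ are non-supersolvable, giving $6$, and that the analogous six subgroups of $\operatorname{SL}_2(5)$ containing the center $Z\cong \mathbb{Z}/2$ give the same count. This part is routine once the list of subgroups is in hand.
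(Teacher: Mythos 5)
Your proposal follows exactly the route the paper sketches for this theorem: a minimal counterexample for (1), reduced via Lemma~\ref{Lemma 2.1}, Lemma~\ref{Lemma 2.2} and Lemma~\ref{Lemma 2.3} to a contradiction, and the same reduction together with part (1) to pin down $G/\Phi(G)\cong A_{5}$ in (2) — indeed you supply strictly more detail than the paper, which defers the whole argument to \cite{BRL2017}. The one soft spot is the lift from $G/\Phi(G)\cong A_{5}$ to $G\in\{A_{5},\operatorname{SL}_{2}(5)\}$, where the centrality of $\Phi(G)$ and the bound $|\Phi(G)|\leq 2$ are asserted on heuristic grounds (a nontrivial action or an odd prime in $|\Phi(G)|$ ``would create extra non-supersolvable subgroups'') rather than proven; this is the genuinely delicate step of Theorem~C in \cite{BRL2017}, and the paper under review likewise gives no argument for it.
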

	
 For this proof a minimal counterexample is constructed. See the proof in \cite{BRL2017}.  To prove (1) the authors used the method of reductio ad absurdum considering false the result and assuming $G$ as the non-solvable group of smaller order with less than 6 non-supersolvable subgroups. The contradiction arrives by the Lemma \ref{Lemma 2.1} and Lemma \ref{Lemma 2.2}  because a contradiction of Lemma \ref{Lemma 2.3} is obtained. To prove (2) the authors used a similar argument and also they used (1).

\subsection{Groups with supersolvable subgroups}
Motivated for \cite{BRL2017} the authors Jiakuan Lu and Jingjing Wang in \cite{lu2020finite} proved the solvability of a finite group with fewer than 53 supersolvable subgroups. Furthermore, they showed that a finite non-solvable group has exactly 53 supersolvable subgroups if and only if it is isomorphic to $ A_5$.\\


\textbf{  A class of groups: } \ \ In \cite{lu2020finite} the class $ \mathcal{S}^{n} $ was introduced as; $ \mathcal{S}^{n}$ as the set of finite groups having exactly $n$ supersolvable subgroups, and a group $ G $ is said to be a $ \mathcal{S}^{n}$-group if $ G \in \mathcal{S}^n .$  \\  

 \begin{lemma}\label{Lemalu 2.1} Let $ G $ be a  $ \mathcal{S}^{n}$-group with the Frattini subgroup $\Phi (G)$, and let $ G / \Phi (G) $ be a $ \mathcal{S}^{m}$-group. Then $ m \leq n $.
\end{lemma}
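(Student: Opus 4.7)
The plan is to exhibit an injection from the set of supersolvable subgroups of $G/\Phi(G)$ into the set of supersolvable subgroups of $G$, which immediately yields $m \leq n$. The natural candidate uses the correspondence theorem: subgroups of $G/\Phi(G)$ are in bijection with subgroups of $G$ that contain $\Phi(G)$, via $\bar H = H/\Phi(G) \longleftrightarrow H$ (taking preimages under the canonical projection $\pi : G \to G/\Phi(G)$). Injectivity of the preimage map is automatic; the content of the lemma is to check that the preimage of a supersolvable subgroup is itself supersolvable.

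Thus the key auxiliary claim I would establish is the following: if $H \leq G$ and $\Phi(G) \leq H$, then $H$ is supersolvable if and only if $H/\Phi(G)$ is supersolvable. The forward direction is routine, since quotients of supersolvable groups are supersolvable. For the reverse direction I would invoke Huppert's characterization that a finite group is supersolvable precisely when every maximal subgroup has prime index. Given a maximal subgroup $M$ of $H$, I would first show that $\Phi(G) \leq M$: if not, then $M\Phi(G) > M$, so by the maximality of $M$ we get $M\Phi(G) = H$, and I would derive a contradiction using the non-generator property of $\Phi(G)$ in $G$ — extending any generating set of $G$ through $H$ and eliminating the elements of $\Phi(G)$ as non-generators forces $M$ to generate $H$. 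Once $\Phi(G) \leq M$ is granted, $M/\Phi(G)$ is a maximal subgroup of the supersolvable group $H/\Phi(G)$, so $|H:M| = |H/\Phi(G):M/\Phi(G)|$ is prime, and $H$ is supersolvable.

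Assembling these pieces, the preimage map $\bar H \mapsto H$ restricts to a well-defined injection from the $m$ supersolvable subgroups of $G/\Phi(G)$ into the $n$ supersolvable subgroups of $G$, giving $m \leq n$.

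The principal obstacle I expect is the auxiliary claim, specifically the step showing that every maximal subgroup $M$ of $H$ must contain $\Phi(G)$. This generalizes the classical Huppert--Gasch\"utz statement ``$G$ is supersolvable iff $G/\Phi(G)$ is supersolvable'' from the ambient group to subgroups, and it is delicate because $\Phi(G)$ need not be contained in $\Phi(H)$; the non-generator property of $\Phi(G)$ must be applied inside $G$ and then transferred back to $H$, which is the subtle point requiring careful handling.
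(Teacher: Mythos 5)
The paper itself does not write out a proof of this lemma: it only remarks that the argument is analogous to that of Lemma \ref{Lemma 2.1} and defers to \cite{lu2020finite}. Your high-level plan (injecting the supersolvable subgroups of $G/\Phi(G)$ into those of $G$) is the right one, but the injection you choose --- the full preimage under $\pi$ --- does not work, because your auxiliary claim is false. It is not true that $\Phi(G)\leq H\leq G$ together with $H/\Phi(G)$ supersolvable forces $H$ to be supersolvable. Take $G=(C_4\times C_4)\rtimes C_3$, where the generator of $C_3$ acts as an order-$3$ matrix in $\mathrm{GL}_2(\mathbb{Z}/4\mathbb{Z})$, e.g.\ $\left(\begin{smallmatrix}0&3\\1&3\end{smallmatrix}\right)$. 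Since $\Phi(N)\leq\Phi(G)$ for the normal subgroup $N=C_4\times C_4$ and $G/\Phi(N)\cong A_4$ has trivial Frattini subgroup, one gets $\Phi(G)=\Phi(N)\cong C_2\times C_2$. The subgroup $H=\Phi(G)\rtimes C_3\cong A_4$ contains $\Phi(G)$ and has $H/\Phi(G)\cong C_3$ cyclic, yet $H$ is not supersolvable. The step that breaks is precisely the one you flagged as delicate: the maximal subgroup $M=C_3$ of $H$ meets $\Phi(G)$ trivially, and the non-generator property of $\Phi(G)$ yields no contradiction from $M\Phi(G)=H$, because that property only controls generating sets of $G$ itself, not of a proper subgroup $H<G$. (The analogy with Lemma \ref{Lemma 2.1} is misleading: for \emph{non}-supersolvable subgroups the preimage map works because only the easy implication --- quotients of supersolvable groups are supersolvable --- is needed.)

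The lemma is nevertheless true, and the standard repair, which is what the cited proof does, is to replace the full preimage by a minimal supplement. Given a supersolvable subgroup $\bar H\leq G/\Phi(G)$, choose $K\leq G$ minimal subject to $K\Phi(G)/\Phi(G)=\bar H$. Minimality forces $K\cap\Phi(G)\leq\Phi(K)$: otherwise some maximal subgroup $M$ of $K$ satisfies $M(K\cap\Phi(G))=K$ and is then a strictly smaller subgroup with $M\Phi(G)/\Phi(G)=\bar H$. Hence $K/\Phi(K)$ is a quotient of $K/(K\cap\Phi(G))\cong\bar H$, so it is supersolvable, and Huppert's theorem (a finite group is supersolvable if and only if its quotient by its own Frattini subgroup is) gives that $K$ is supersolvable. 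The assignment $\bar H\mapsto K$ is injective since $\bar H$ is recovered from $K$ as $K\Phi(G)/\Phi(G)$, and this yields $m\leq n$.
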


We can observe that this lemma is equivalent to Lemma \ref{Lemma 2.1} in the reference  \cite{BRL2017}, which was the motivation of the recent paper \cite{lu2020finite} and contains a similar proof.

\begin{lemma}\label{Lemalu 2.2} Let be $ G = \operatorname {PSL}_2 (q) $, and let $ p $ be a prime number. Assume $q=p^{f} \geq 8 $ for $ p = 2 $, or $ q = p ^ {f} \geq 13 $ for $ p >2$ is prime. Then $ G $ is a $ \mathcal{S}^{n}$-group for $ n \geq 2 \left (q ^ {2} +1 \right) $.
\end{lemma}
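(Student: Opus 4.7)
The plan is to use Dickson's classification of subgroups of $\operatorname{PSL}_{2}(q)$, recalled above, and to count certain supersolvable subgroups until we reach the bound $2(q^{2}+1)$. Since every cyclic group, every dihedral group, and every elementary abelian $p$-group is supersolvable (each has an obvious normal series with cyclic factors), I would focus on five families from Dickson's list: the Sylow $p$-subgroups (elementary abelian of order $q$), the maximal split cyclic tori (of order $(q-1)/k$), the maximal non-split cyclic tori (of order $(q+1)/k$), and the dihedral normalizers of each kind of torus (of orders $2(q-1)/k$ and $2(q+1)/k$). Here $k=\gcd(2,q-1)$, and the trivial subgroup will be added at the end.

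Counting each family via the orbit--stabiliser theorem, using $|\operatorname{PSL}_{2}(q)|=q(q-1)(q+1)/k$ and the standard Borel order $q(q-1)/k$, one obtains $q+1$ Sylow $p$-subgroups, $q(q+1)/2$ split tori together with as many dihedral normalizers, and $q(q-1)/2$ non-split tori together with as many dihedral normalizers. The crucial step is that all of these subgroups are pairwise distinct. The five families have pairwise distinct orders $q,\ (q-1)/k,\ (q+1)/k,\ 2(q-1)/k,\ 2(q+1)/k$, which is routinely checked under the hypothesis $q\geq 8$ (for $p=2$) or $q\geq 13$ (for $p>2$); two distinct maximal tori of $\operatorname{PSL}_{2}(q)$ intersect trivially, since every non-central semisimple element has a unique maximal torus containing it (namely its centraliser); and when $(q\pm 1)/k\geq 3$, a dihedral group of order $2(q\pm 1)/k$ contains a unique cyclic subgroup of index $2$, so distinct tori yield distinct dihedral normalizers.

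Adding up all contributions together with the trivial subgroup gives
\[
1+(q+1)+\frac{q(q+1)}{2}+\frac{q(q-1)}{2}+\frac{q(q+1)}{2}+\frac{q(q-1)}{2}=2q^{2}+q+2\;\geq\;2(q^{2}+1),
\]
which is the desired inequality. The main obstacle is precisely the distinctness verification: one must separate the split and non-split classes carefully and rule out accidental coincidences (for instance, a Sylow $p$-subgroup agreeing with a torus or a dihedral normalizer, or two distinct tori sharing the same normalizer). These coincidences are precisely what the exceptional small $q$ could produce, along with subgroups of sporadic type $A_{4},S_{4},A_{5}$ or $\operatorname{PSL}_{2}(p^{m})$ on Dickson's list; the numerical hypotheses $q\geq 8$ or $q\geq 13$ are exactly what is needed to eliminate them, after which the bound reduces to the one-line arithmetic estimate above.
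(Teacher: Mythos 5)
Your count is correct and follows the same route the paper sketches (Dickson's classification plus conjugacy-class counting via orbit--stabiliser), but it is actually more complete than the paper's own remark, and necessarily so: the paper only points at the two conjugacy classes of maximal dihedral subgroups $D_{2(q-1)/d}$ and $D_{2(q+1)/d}$, which contribute $\frac{q(q+1)}{2}+\frac{q(q-1)}{2}=q^{2}$ subgroups, short of the required $2(q^{2}+1)$. By also counting the cyclic tori themselves, the $q+1$ elementary abelian Sylow $p$-subgroups and the trivial subgroup, you reach $2q^{2}+q+2\geq 2(q^{2}+1)$, and your distinctness checks (pairwise distinct orders across families, uniqueness of the index-two cyclic subgroup in a dihedral group of order at least $6$) are exactly what is needed; note only that conjugates within one family are automatically distinct subgroups, so the trivial-intersection claim for tori is not actually required.
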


To see this lemma it is important to review the conjugation classes of simple groups, i.e., observing the Thompson classification and the quantity of supersolvable groups of $ G = \operatorname {PSL}_2 (q) $. In particular, observing the number of dihedral subgroups, we conclude that they are supersolvable and they contain two conjugation classes: type $D_{2(q-1)/d}$ and type  $D_{2(q+1)/d}$ respectively, where $d=(2,q-1).$

\begin{lemma}\label{Lemalu 2.3} Let $ G $ be a non-solvable group whose maximal subgroups are solvable. Then $ G / \Phi (G) $ is a minimal simple group.
\end{lemma}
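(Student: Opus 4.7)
My plan is to verify the three defining features of a minimal simple group for $\bar G := G/\Phi(G)$: that it is non-solvable, that all of its maximal subgroups are solvable, and that it is simple. The first two steps are essentially bookkeeping, while simplicity is the serious step.

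First, I would note that $\bar G$ is non-solvable: since $\Phi(G)$ is nilpotent by the Frattini theorem stated earlier, it is solvable, and if $\bar G$ were solvable then $G$ would be solvable by the closure property of solvable groups under extensions, contradicting the hypothesis. Next, since $\Phi(G)$ is contained in every maximal subgroup of $G$, the correspondence theorem puts maximal subgroups of $\bar G$ in bijection with maximal subgroups of $G$ via $M \mapsto M/\Phi(G)$; since each such $M$ is solvable by hypothesis, each $M/\Phi(G)$ is a solvable quotient, so every maximal subgroup of $\bar G$ is solvable. This bijection also shows $\Phi(\bar G) = \bigcap_{\bar M} \bar M = \bigl(\bigcap_{M} M\bigr)/\Phi(G) = \Phi(G)/\Phi(G) = 1$, a fact I will use in the simplicity argument.

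The main work is proving $\bar G$ is simple. I would take a minimal normal subgroup $\bar N \trianglelefteq \bar G$. Since $\Phi(\bar G) = 1$, there is some maximal subgroup $\bar M$ of $\bar G$ not containing $\bar N$, hence $\bar G = \bar N \bar M$ by maximality. Then $\bar G/\bar N \cong \bar M/(\bar M\cap \bar N)$ is a quotient of the solvable $\bar M$, hence solvable. If $\bar N$ were solvable, then $\bar G$ would be solvable, contradicting the first step; therefore $\bar N$ is non-solvable. But any non-solvable subgroup cannot lie inside a solvable maximal subgroup of $\bar G$; so if $\bar N$ were proper it would be contained in some maximal subgroup of $\bar G$, all of which are solvable --- contradiction. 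Hence $\bar N = \bar G$, meaning $\bar G$ itself is a minimal normal subgroup of itself, so $\bar G$ is characteristically simple, and being finite and non-solvable, it is a direct product $T^k$ of isomorphic non-abelian simple groups $T$.

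It remains to eliminate $k \geq 2$. If $k \geq 2$, let $M_0$ be any maximal subgroup of one factor $T$; then $T^{k-1}\times M_0$ (with $M_0$ placed in the last factor) is a maximal subgroup of $T^k$ that contains a copy of $T$, hence is non-solvable, contradicting the second step. Therefore $k = 1$, and $\bar G \cong T$ is a non-abelian simple group all of whose maximal subgroups are solvable, i.e. a minimal simple group. The trickiest step is the penultimate one --- ruling out the proper-minimal-normal case by using that $\bar N$ non-solvable cannot sit inside a solvable maximal subgroup --- since it crucially combines the Frattini-free property of $\bar G$ with the solvability hypothesis on maximal subgroups.
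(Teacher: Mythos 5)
Your argument is correct. The paper itself gives no proof of this lemma --- it only states it and defers to the references \cite{BRL2017,lu2020finite} --- so there is no in-paper argument to compare against, but your proof is essentially the standard one from that literature: pass to $\bar G=G/\Phi(G)$, check it is non-solvable with solvable maximal subgroups and trivial Frattini subgroup, then use a minimal normal subgroup $\bar N$ together with a maximal subgroup supplementing it to force $\bar N=\bar G$. Each step checks out: the nilpotency (hence solvability) of $\Phi(G)$ gives non-solvability of $\bar G$; the bijection $M\mapsto M/\Phi(G)$ on maximal subgroups gives both the solvability of maximal subgroups of $\bar G$ and $\Phi(\bar G)=1$; and the supplement argument $\bar G=\bar N\bar M$ correctly traps $\bar N$ between ``non-solvable'' and ``contained in a solvable maximal subgroup.'' One small remark: once you know every minimal normal subgroup of $\bar G$ equals $\bar G$, simplicity follows immediately (any nontrivial normal subgroup of a finite group contains a minimal normal one), so the detour through characteristic simplicity and the product $T^k$ --- and the construction of the maximal subgroup $T^{k-1}\times M_0$ to exclude $k\geq 2$ --- is correct but redundant. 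If you prefer to keep the $T^k$ route, note also that for $k\geq 2$ the factor $T_1\times\cdots\times T_{k-1}$ is already a proper nontrivial normal subgroup, which contradicts minimality of $\bar G$ as a normal subgroup of itself without any appeal to maximal subgroups of $T^k$.
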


\begin{lemma}\label{Lemalu 2.4} A minimal simple group is a $\mathcal{S}_{n}$-group with $ n \geq 53. $ The only minimal simple group in $ \mathcal{S}^{53} $ is $ A_{ 5} $.
\end{lemma}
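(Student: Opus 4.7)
The strategy is to combine Thompson's classification of minimal simple groups (listed in the theorem before Lemma~\ref{Lemalu 2.2}) with Dickson's classification of the subgroups of $\operatorname{PSL}_2(p^f)$, applying Lemma~\ref{Lemalu 2.2} to dispatch the infinite tails and then checking the finitely many small orders by hand. First I would identify, using Dickson, which of the listed subgroup types of $\operatorname{PSL}_2(p^f)$ are supersolvable: every $p$-group and every cyclic group is supersolvable; every dihedral group is supersolvable (cyclic kernel of index $2$); the groups $A_4$, $S_4$, $A_5$, $\operatorname{PSL}_2(p^m)$ with $m\geq 2$ and most of the $\operatorname{PGL}_2(p^m)$ are not. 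The semidirect products $C_p^m\rtimes C_t$ of type (7) must be sorted case by case using the action of $C_t$, but for counting purposes only the lower bound contributed by the cyclic and dihedral families will matter.

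For the tail of the classification, Lemma~\ref{Lemalu 2.2} already yields $n\geq 2(q^2+1)$ supersolvable subgroups whenever $q=p^f\geq 8$ (resp.\ $q\geq 13$) for $p=2$ (resp.\ $p>2$), which is far larger than $53$; and analogous inequalities (again obtained by bounding from below by cyclic and dihedral subgroups in every conjugacy class coming from the two tori of orders $(q\pm 1)/\gcd(2,q-1)$) take care of $\operatorname{Sz}(2^q)$ for $q\geq 3$, whose order $(q^2+1)q^2(q-1)$ already produces many more than $53$ cyclic subgroups alone. Thus I only need to inspect a finite list of small exceptions: $\operatorname{PSL}_2(5)\cong A_5$, $\operatorname{PSL}_2(7)$, $\operatorname{PSL}_2(8)$, $\operatorname{PSL}_2(11)$, $\operatorname{PSL}_3(3)$, and $\operatorname{Sz}(8)$.

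The main computation is the case $G=A_5$, where Dickson (or direct inspection) lists $15$ subgroups of order $2$, $10$ of order $3$, $6$ of order $5$, $5$ copies of $V_4$, $6$ copies of $D_{10}$, $10$ copies of $S_3$, together with the trivial subgroup, giving $1+15+10+6+5+6+10=53$ supersolvable subgroups; the non-supersolvable ones are the five $A_4$'s and $G$ itself. For each of the remaining small exceptions, the same enumeration by Dickson's list produces strictly more than $53$ supersolvable subgroups: e.g.\ $\operatorname{PSL}_2(7)$ has $21$ involutions, eight Sylow $3$'s, eight Sylow $7$'s and a large family of $D_6$, $D_8$, $C_4$ subgroups that already surpass $53$; $\operatorname{PSL}_2(8)$, $\operatorname{PSL}_2(11)$, $\operatorname{PSL}_3(3)$, $\operatorname{Sz}(8)$ are treated similarly, using the known orders $504$, $660$, $5616$, $29120$ and the corresponding tori/Borels to bound the number of cyclic and dihedral subgroups from below.

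The main obstacle is the bookkeeping in these small exceptional cases: one has to be careful to count conjugacy classes of subgroups correctly (multiplying by the index of the normalizer), and to correctly decide supersolvability of the Borel-type semidirect products $C_p^f\rtimes C_t$, since those fail to be supersolvable precisely when $f\geq 2$ and the cyclic acting group has no proper invariant subgroup in $C_p^f$. Once all small cases are verified, combining the inequality $n\geq 53$ everywhere with equality only for $A_5$ gives both assertions of the lemma.
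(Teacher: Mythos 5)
Your proposal follows essentially the same route as the paper (and as Lu--Wang, to which the paper defers): Thompson's classification of minimal simple groups combined with Dickson's subgroup list, the bound of Lemma~\ref{Lemalu 2.2} to dispose of large $q$, and the explicit enumeration $1+15+10+6+5+6+10=53$ for $A_5$. The argument is sound; the only slip is that $\operatorname{PSL}_2(7)$ has $28$ Sylow $3$-subgroups rather than $8$ (it is the Sylow $7$-subgroups that number $8$), which does not affect the lower bound you need.
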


In a similar way as Lemma \ref{Lemma 2.3}, these lemmas are obtained through Thompson classification on simple minimal groups together with Dickson classification.  Further, we provide examples to compute the number of supersolvable and non-supersolvable subgroups of $A_5$ as well their implementation in GAP system.

\begin{theorem} Let $ G $ be a  $ \mathcal{S}^{n} $-group.
\begin{enumerate}
\item[(1)] If $ n $ smaller than $53$, then $G$ is solvable.
\item[(2)] If $ G $ is a non-solvable $ \mathcal{S}^{n} $-group, then $ n = 53 $ if and only if $ G \cong A_ {5} $.
\end{enumerate}
\end{theorem}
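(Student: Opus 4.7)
The plan is to run a minimal counterexample argument for (1) and then leverage the structural conclusion to prove (2), with both parts reduced to Lemmas~\ref{Lemalu 2.1}--\ref{Lemalu 2.4}.

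For (1), let $G$ be a non-solvable counterexample of minimal order, so $G$ is an $\mathcal{S}^n$-group with $n < 53$. If some maximal subgroup $M < G$ were non-solvable, then every supersolvable subgroup of $M$ would also be a supersolvable subgroup of $G$, so $M$ itself would be an $\mathcal{S}^k$-group with $k \leq n < 53$; this makes $M$ a strictly smaller non-solvable counterexample, contradicting minimality. Hence every maximal subgroup of $G$ is solvable, and Lemma~\ref{Lemalu 2.3} forces $G/\Phi(G)$ to be a minimal simple group. Lemma~\ref{Lemalu 2.4} then guarantees at least $53$ supersolvable subgroups in $G/\Phi(G)$, and Lemma~\ref{Lemalu 2.1} transfers this lower bound to $G$, yielding $n \geq 53$---a contradiction.

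For (2), the ``if'' direction is immediate from Lemma~\ref{Lemalu 2.4} applied to $A_5$. For the ``only if'' direction, let $G$ be non-solvable with exactly $53$ supersolvable subgroups. I would first rule out a non-solvable maximal subgroup $M < G$: by part (1) such an $M$ must have at least $53$ supersolvable subgroups, and since all of them already sit among the $53$ of $G$, the two collections coincide. But then every cyclic $\langle g \rangle$ ($g \in G$)---being supersolvable---lies in $M$, forcing $G = M$, absurd. Thus every maximal subgroup of $G$ is solvable, Lemma~\ref{Lemalu 2.3} applies, and combining Lemma~\ref{Lemalu 2.1} with Lemma~\ref{Lemalu 2.4} narrows $G/\Phi(G)$ down to $A_5$.

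The main obstacle is promoting $G/\Phi(G) \cong A_5$ to $G \cong A_5$, i.e., showing $\Phi(G) = 1$. The natural move is to inspect the injection underlying Lemma~\ref{Lemalu 2.1}: it sends a supersolvable $H/\Phi(G) \leq G/\Phi(G)$ to its preimage $H \leq G$, whose image therefore consists only of those supersolvable subgroups of $G$ that contain $\Phi(G)$. If $\Phi(G) \neq 1$, then the trivial subgroup $\{1\} \leq G$ is supersolvable but does \emph{not} contain $\Phi(G)$, so it falls outside the image. Counting produces at least $53 + 1 = 54$ supersolvable subgroups in $G$, contradicting $G \in \mathcal{S}^{53}$. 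Hence $\Phi(G) = 1$ and $G \cong A_5$, completing the proof.
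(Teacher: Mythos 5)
Your overall strategy --- a minimal counterexample for (1) and then Lemmas~\ref{Lemalu 2.1}, \ref{Lemalu 2.3} and \ref{Lemalu 2.4} for both parts --- is exactly the route the paper sketches from Lu and Wang, and everything up to and including the conclusion $G/\Phi(G)\cong A_5$ in part (2) is sound. The gap is in your final step. You assert that the injection underlying Lemma~\ref{Lemalu 2.1} sends a supersolvable subgroup $H/\Phi(G)$ of $G/\Phi(G)$ to its full preimage $H$. For that map to land in the set of supersolvable subgroups of $G$ one would need: if $\Phi(G)\le H\le G$ and $H/\Phi(G)$ is supersolvable, then $H$ is supersolvable. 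This is false in general. Take $G=(C_4\times C_4)\rtimes C_3$ with $C_3$ acting by an automorphism of order $3$. Since $\Phi(N)\le\Phi(G)$ for $N\trianglelefteq G$, we get $C_2\times C_2=\Phi(C_4\times C_4)\le\Phi(G)$, and $G/(C_2\times C_2)\cong A_4$ has trivial Frattini subgroup, so $\Phi(G)\cong C_2\times C_2$. The preimage of a subgroup of order $3$ of $G/\Phi(G)\cong A_4$ is $\Phi(G)\rtimes C_3\cong A_4$, which is not supersolvable even though the quotient is cyclic. So ``the'' injection is not the preimage map, its image is not the set of supersolvable subgroups containing $\Phi(G)$, and your count of $53+1$ does not follow as written.

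The conclusion is still reachable by using the injection that actually proves Lemma~\ref{Lemalu 2.1}: for each supersolvable $\overline{H}=H/\Phi(G)$ choose $K\le H$ minimal subject to $K\Phi(G)=H$; then $K\cap\Phi(G)\le\Phi(K)$, hence $K/\Phi(K)$ is an epimorphic image of $K/(K\cap\Phi(G))\cong\overline{H}$ and $K$ is supersolvable because supersolvability is a saturated formation, while $H=K\Phi(G)$ recovers $\overline{H}$, giving injectivity. With this map, the images of the $52$ nontrivial supersolvable subgroups of $G/\Phi(G)\cong A_5$ satisfy $K\Phi(G)\supsetneq\Phi(G)$, so none of them is contained in $\Phi(G)$; adding the trivial subgroup and the nontrivial nilpotent (hence supersolvable) subgroup $\Phi(G)$ itself produces at least $54$ supersolvable subgroups whenever $\Phi(G)\ne 1$, which is the contradiction you wanted. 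The rest of your write-up, including the argument that a non-solvable maximal subgroup would absorb every cyclic subgroup of $G$, is correct.
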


The proof of previous theorem is similar to the proof of Theorem \ref{teoBC}. For this proof a minimal counterexample is constructed. See the proof in \cite{lu2020finite}. To prove (1) the authors assumed false the result (reductio ad absurdum) and also they considered  $G\in\mathcal{S}^{n}$ with $n<53$ as the nonsolvable group with smaller order. Using Lemma  \ref{Lemma 2.2} and Lemma \ref{Lemalu 2.2} they arrived to a contradiction of Lemma \ref{Lemalu 2.4}. To prove (2) the authors used similar statements and item (1).

\section{New contributions}
In this section we present some algorithms in GAP related to the previous results, in particular, we present some programming modules about supersolvable, non-supersolvable and non-nilpotent subgroups of the alternating group of five elements $A_5$. Finally, we provide some results concerning the relation between (p,q)-groups and dihedral groups with supersolvability. .

\subsection{Programs in GAP System}
Now we observe how to compute the quantity of subgroups of certain type simple groups, providing by Thompson classification, using Dickson classification for subgroups. Due to $A_5$ is the group with smaller order in the classification of simple groups, see ATLAS \cite{conway1985} for description of subgroups of $A_5$, we focus on $A_5$ to illustrate these programs.\\

Recall that $\operatorname{PSL}_2(4)\cong A_5\cong \operatorname{PSL}_2(5)$ with $|\operatorname{PSL}_2(5)|=60$ and we know the description of the subgroups of  $\operatorname{PSL}_2(p)$ because result of Dickson. Thus, $\operatorname{PSL}_2(5) $ has the following subgroups:\\

\begin{enumerate}
\item[{ Type (1) }] elementary abelian $p$-groups (2-Sylow, 3-Sylow y 5-Sylow),
\item[{ Type (2) }] cyclic groups ($C_2, C_3$),
\item[{ Type (3) }] dihedral groups $D_6\cong S_3$ y $D_{10}$,
\item[{ Type (4) }] $A_4$,
\item[{ Type (6) }] $A_5$,
\item[{ Type (7) }] Semidirect products of elementary abelian groupos of order $p^{m}$ with cyclic groups of order $t$; where $t \mid p^{m}-1$ and $t |p^{f}-1$; ($C_2\times C_2$)\\
\end{enumerate}

We recall that the number of conjugated subgroups is given by the index of the normalizer  

$$[G:N_G(H)]=|\{H^g|g\in G\}|$$
 
 and that if $P$ is a Sylow $p$-subgroup, then $$[G:P]=[G:N_G(P)][N_G(P):P] \quad \text{or} \quad m=n_p[N_G(P):P]$$
where $m$ is the index of Sylow subgroup. Thus, $n_p\equiv 1 \mod p$. \\

Also we recall that $G=\operatorname{PSL}_2(5)$ is simple (it has no non-trivial normal proper subgroups). Thus, for all subgroup $H$ it is satisfied $N_G(H)=H$, because the normalizer is a normal subgroup. In this way, we can compute the number of conjugated subgroups to $D_6$ and $D_{10}$ as $$[G:D_{10}]=\frac{60}{10}=6, \quad [G:D_{6}]=\frac{60}{6}=10.$$
In a similar way, of Type $A_4$ there $[G:A_4]={60}/{12}=5$ conjugated subgroups to $A_4$.\\

Similarly, we can apply the same methodology to each type of and observing the number of Sylow subgroups of each Sylow subgroup of $G$. Thus, we can obtain that the number of subgroups of $A_5$ is  59. Thus, the non-supersolvable subgroups of $A_5$ are $A_4$ and $A_5$, being 5 of type $A_4$ and 1 of type $A_5$. In consequence, $A_5$ has 6 supersolvable subgroups and 53 supersolvable subgroups. Thus, due to $A_5$ is minimal simple group with smaller order, we can give some conjectures about the quantity of certain kind of subgroups of a particular group $G$ in order that $G$ must be a solvable group. In the same way, we can obtain the number of non-nilpotent subgroups of $G$ using the Thompson classification and the Dickson classification of $A_5$.  \\

In this way, we present the following algorithm, implemented by the authors in GAP.

\begin{algorithm}
\caption*{\bf Computing the number of non-supersolvable subgroups of a group of $A_5$}
\begin{verbatim}
gap> G:=AlternatingGroup(5);
Alt( [ 1 .. 5 ] )
gap> L:=LatticeSubgroups(G);
<subgroup lattice of Alt( [ 1 .. 5 ] ), 9 classes, 59 subgroups>
gap> C:=ConjugacyClassesSubgroups(L);
[ Group( () )^G, Group( [ (2,3)(4,5) ] )^G, Group( [ (3,4,5) ] )^G, 
Group( [ (2,3)(4,5), (2,4)(3,5) ] )^G, Group( [ (1,2,3,4,5) ] )^G, 
Group( [ (1,2)(4,5), (3,4,5) ] )^G, Group( [ (1,4)(2,3), (1,3)(4,5) ] )^G,
Group( [ (3,4,5), (2,4)(3,5) ] )^G, 
Group( [ (2,4)(3,5), (1,2,5) ] )^G ]
gap>FNS:=Filtered (C, t -> not IsSupersolvable(Representative(t)));
[ Group( [ (3,4,5), (2,4)(3,5) ] )^G, Group( [ (2,4)(3,5), (1,2,5) ] )^G ]
gap> S:=Sum(List(FNS, Size));
6
\end{verbatim}
\end{algorithm}

\begin{algorithm}
\caption*{\bf Computing the number of supersolvable subgroups of a group of $A_5$}
\begin{verbatim}
gap> G:=AlternatingGroup(5);
Alt( [ 1 .. 5 ] )
gap> L:=LatticeSubgroups(G);
<subgroup lattice of Alt( [ 1 .. 5 ] ), 9 classes, 59 subgroups>
gap> C:=ConjugacyClassesSubgroups(L);
[ Group( () )^G, Group( [ (2,3)(4,5) ] )^G, Group( [ (3,4,5) ] )^G, 
Group( [ (2,3)(4,5), (2,4)(3,5) ] )^G, Group( [ (1,2,3,4,5) ] )^G, 
Group( [ (1,2)(4,5), (3,4,5) ] )^G, Group( [ (1,4)(2,3), (1,3)(4,5) ] )^G,
Group( [ (3,4,5), (2,4)(3,5) ] )^G, 
Group( [ (2,4)(3,5), (1,2,5) ] )^G ]
gap>FS:=Filtered (C, t ->  IsSupersolvable(Representative(t)));
[ Group( () )^G, Group( [ (2,3)(4,5) ] )^G, Group( [ (3,4,5) ] )^G, Group( [ (2,3)(4,5), (2,4)(3,5) ] )^G,
Group( [ (1,2,3,4,5) ] )^G, Group( [ (1,2)(4,5), (3,4,5) ] )^G, 
  Group( [ (1,4)(2,3), (1,3)(4,5) ] )^G ]
gap>SS:=Sum(List(FS, Size));
53
\end{verbatim}
\end{algorithm}

\begin{algorithm}
\caption*{\bf Computing the number of non-nilpotent subgroups of a group of $A_5$}
\begin{verbatim}
gap> G:=AlternatingGroup(5);
Alt( [ 1 .. 5 ] )
gap> L:=LatticeSubgroups(G);
<subgroup lattice of Alt( [ 1 .. 5 ] ), 9 classes, 59 subgroups>
gap> C:=ConjugacyClassesSubgroups(L);
[ Group( () )^G, Group( [ (2,3)(4,5) ] )^G, Group( [ (3,4,5) ] )^G, 
Group( [ (2,3)(4,5), (2,4)(3,5) ] )^G, Group( [ (1,2,3,4,5) ] )^G, 
Group( [ (1,2)(4,5), (3,4,5) ] )^G, Group( [ (1,4)(2,3), (1,3)(4,5) ] )^G, 
Group( [ (3,4,5), (2,4)(3,5) ] )^G, 
Group( [ (2,4)(3,5), (1,2,5) ] )^G ]
gap>FN:=Filtered (C, t -> not IsNilpotent(Representative(t)));
[ Group( [ (1,2)(4,5), (3,4,5) ] )^G, Group( [ (1,4)(2,3), (1,3)(4,5) ] )^G,  
Group( [ (3,4,5), (2,4)(3,5) ] )^G, Group( [ (2,4)(3,5), (1,2,5) ] )^G ]
gap>SN:=Sum(List(FN, Size));
22
\end{verbatim}
\end{algorithm}

\subsection{Supersolvability of $(n,m)$-groups and dihedral groups}

The following elementary result illustrates the concept of supersolvability.

\begin{proposition}\label{prop1:aclm}
The dihedral group $D_{n}=\left\langle r, s \mid r^{n}=1, s^{2}=1, s r s=r^{-1}\right\rangle
$ is supersolvable.
\end{proposition}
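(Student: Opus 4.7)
The plan is to exhibit an explicit normal chain of $D_n$ with cyclic factors, which by the definition of supersolvability will finish the proof immediately. The natural candidate chain is
$$\{1\} \;\trianglelefteq\; \langle r \rangle \;\trianglelefteq\; D_n,$$
so the whole proof reduces to checking two things: that both containments are normal in $D_n$, and that the successive quotients are cyclic.

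First I would verify that $\langle r\rangle \trianglelefteq D_n$. The defining relation $srs = r^{-1}$ gives $s r^k s^{-1} = r^{-k} \in \langle r\rangle$ for every $k$, and $r$ certainly normalizes $\langle r\rangle$, so $\langle r\rangle$ is stable under conjugation by the generators $r$ and $s$, hence by all of $D_n$. (Alternatively, one observes that $[D_n:\langle r\rangle] = 2$, so $\langle r\rangle$ is automatically normal.) The subgroup $\{1\}$ is trivially normal.

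Next I would identify the two factors. The bottom factor $\langle r\rangle/\{1\}$ is just $\langle r\rangle$, which is cyclic of order $n$ by the presentation. The top factor $D_n/\langle r\rangle$ has order $|D_n|/n = 2$, so it is cyclic of order $2$. Both factors are therefore cyclic, and the chain satisfies the definition of a supersolvable series.

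There is no real obstacle here; the argument is essentially a one-line verification once the correct chain is named. The only caveat worth noting is that one should use the definition of supersolvability in which each $G_i$ is normal in the whole group $G$ (not merely in $G_{i+1}$); the chain above satisfies this stronger condition, so it works under either convention.
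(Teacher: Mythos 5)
Your proof is correct and follows essentially the same route as the paper's: both exhibit the chain $\{1\}\trianglelefteq\langle r\rangle\trianglelefteq D_n$ and observe that the factors $\langle r\rangle$ and $D_n/\langle r\rangle\cong C_2$ are cyclic. Your version is in fact slightly more careful, since you actually verify the normality of $\langle r\rangle$ (via the index-2 or conjugation argument) rather than asserting it, and you avoid the paper's unnecessary (and in general false) claim that $\langle r\rangle$ is the only proper normal subgroup of $D_n$.
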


\begin{proof}
Due to the dihedral group contains only one normal proper subgroup, which corresponds to $R_n=\left\langle r\right\rangle$ and $D_n$ is semidirect product of $C_2$ with $R_n$, that is $D_n\cong C_2\rtimes R_n$, then the homomorphism  $\varphi: \,D_n\to D_n/R_n$ induces the isomorfism $\phi:\,C_2\to D_n/R_n$. In this way,  $1\leq R_n\leq D_n$ is a chain of normal subgroups and for instance $D_n/R_n$ is isomorphic to $C_2$ that is cyclic. Thus, we conclude $D_n$ is supersolvable. 
\end{proof}

The following example corresponds to the group of symmetries of an square, which is isomorphic to $D_4$.

\begin{example}
The group $G:=\langle(1,2,3,4),(1,3)\rangle \cong D_{4}$ is supersolvable, where $D_4$ is the well known dihedral group or order $8$. Considering $G_1$ and $G_2$ subgroups of $G$ such that $G_{1}=\langle(1,3)(2,4)\rangle$  and  $G_{2}=\langle(1,2,3,4)\rangle$, then  $$1_G=G_{0} \leq G_{1} \leq G_{2} \leq G_{3}=G$$
is a normal chain with cyclic factors. Thus, we can conclude that $D_4$ is supersolvable.
\end{example}

Although this is a review paper, we present this original result that is elementary, but it is new as far as we know.\\

Now groups of type $(n,m)$ introduced in \cite{charris2,acostateoremas, charris1} are defined. A group of type $(n,m)$ is a group of the following form $$G:=\left< s,r \ | \ s^n=r^m=1, \ sr=r^ts \ \text{with} \ n, m \in \mathbb{Z}_+, \ 1<t<m \right>.$$

Furthermore, each element $x$ of the group $G$ can be written uniquely as $$x=s^ir^j \quad \quad i,j\in\mathbb{Z}, \ \ 0\leq i < n, \ \ 0\leq j < m. $$

It can be seen that $m\geq 3$ and $G$ is not abelian, so $n\geq 2$ and from the definition it follows that $|G|=nm.$ We say that this group $G$ is of type $(n,m)$ and generated $(s,r)$. In addition, the following properties are deduced:

\begin{enumerate}
    \item [(i)] $sr^{t^k}=r^{t^{k+1}}s, \quad \quad k=0,1,2, \ldots$
    \item [(ii)] $s^nr=r^{t^k}a^k, \quad \quad k=0,1,2,\ldots$
    \item [(iii)] $s^kb^i=b^{it^k}a^k, \quad \quad n\leq0, \ i\in\mathbb{Z}$
\end{enumerate}

\begin{theorem}\label{semi}
Let $G$ be a group of type $(n,m)$ and generated by $(s,r),$ and if $H=\left<s\right>$ and $N=\left<r\right> $. So $G=H \rtimes N$ with $N$ normal in $G.$
\end{theorem}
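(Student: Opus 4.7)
The plan is to verify the three conditions from Definition \ref{defsmp} (equivalent form): $N \trianglelefteq G$, $HN = G$, and $H \cap N = \{1\}$. The uniqueness of the normal form $x = s^{i}r^{j}$ with $0 \leq i < n$ and $0 \leq j < m$, already recorded in the excerpt, will do most of the heavy lifting.

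First I would establish that $N = \langle r \rangle$ is normal in $G$. Since $G$ is generated by $s$ and $r$, it suffices to check that both generators normalize $N$. Clearly $r N r^{-1} = N$. For $s$, the defining relation $sr = r^{t}s$ rewrites as $s r s^{-1} = r^{t} \in N$, so conjugation by $s$ maps the generator $r$ into $N$; consequently $s r^{k} s^{-1} = r^{tk} \in N$ for every $k$, giving $sNs^{-1} \subseteq N$. By an analogous computation with $s^{-1}$ (using property (iii) of the excerpt), the reverse inclusion holds, so $sNs^{-1} = N$ and thus $N \trianglelefteq G$.

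Next I would show $HN = G$ and $H \cap N = \{1\}$, both of which fall straight out of the uniqueness of the normal form. For the first, every $x \in G$ is of the form $s^{i}r^{j}$ with $s^{i} \in H$ and $r^{j} \in N$, so $x \in HN$; conversely $HN \subseteq G$ is automatic. For the second, suppose $x \in H \cap N$. Then $x = s^{i} = s^{i} r^{0}$ and also $x = r^{j} = s^{0} r^{j}$ for suitable exponents in the allowed ranges. Uniqueness of the representation forces $i = 0$ and $j = 0$, hence $x = 1$.

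Having verified $N \trianglelefteq G$, $HN = G$, and $H \cap N = \{1\}$, I would conclude by invoking Definition \ref{defsmp} that $G$ is the semidirect product of $H$ and $N$, written $G = H \rtimes N$, with the action of $H$ on $N$ induced by conjugation: $\theta(s)(r) = srs^{-1} = r^{t}$. No real obstacle should arise here; the only point requiring some care is making sure that the normalization claim uses both generators and that the uniqueness of $s^{i}r^{j}$ is applied symmetrically when extracting $H \cap N = \{1\}$.
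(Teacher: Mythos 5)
Your proof is correct. Note that the paper itself states Theorem \ref{semi} \emph{without any proof} (it passes immediately to the supersolvability theorem, whose proof merely cites Theorem \ref{semi}), so there is no argument in the source to compare against; your write-up actually fills a gap in the exposition. The route you take --- verifying the three conditions $N \trianglelefteq G$, $HN = G$, $H \cap N = \{1\}$ from the equivalent form of Definition \ref{defsmp}, with the last two read off from the uniqueness of the normal form $s^{i}r^{j}$ --- is the standard and expected one. One small simplification: for the reverse inclusion $N \subseteq sNs^{-1}$ you need not compute with $s^{-1}$ (and the paper's property (iii) is in any case garbled, with undefined symbols $a$ and $b$); since $G$ is finite, $sNs^{-1} \subseteq N$ together with $|sNs^{-1}| = |N|$ already forces equality. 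Alternatively, $s^{n} = 1$ gives $r = s^{n}rs^{-n} = r^{t^{n}}$, so $t^{n} \equiv 1 \pmod{m}$ and $r \mapsto r^{t}$ is an automorphism of $N$. Either remark closes the only loose thread in your argument.
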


\begin{theorem}
Let $G$ be a group of type $(n,m).$ Then $G$ is a supersolvable group.
\end{theorem}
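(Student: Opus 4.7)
The plan is to construct an explicit normal series witnessing supersolvability, using the semidirect product structure already guaranteed by Theorem \ref{semi}. By Theorem \ref{semi}, writing $H=\langle s\rangle$ and $N=\langle r\rangle$, we have $G=H\rtimes N$ with $N\trianglelefteq G$. The natural candidate for a supersolvable chain is then
\[
\{1_G\}\;\leq\;N\;\leq\;G,
\]
and I would first verify that this already fits the definition given in the excerpt: $\{1_G\}$, $N$ and $G$ are all normal in $G$ (trivially for the extremes, and by Theorem \ref{semi} for $N$); the bottom factor $N/\{1_G\}=\langle r\rangle$ is cyclic of order $m$; and the top factor $G/N$ is isomorphic to $H=\langle s\rangle$, which is cyclic of order $n$. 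Both quotients are cyclic, which is exactly what the definition of supersolvable demands.

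If one prefers a refined chain with factors of prime order (the classical formulation), I would next insert intermediate terms inside $N$ and above $N$ using the prime factorizations of $m$ and $n$. Writing $m=p_1p_2\cdots p_k$ with primes repeated, the subgroups
\[
\{1\}\;<\;\langle r^{m/p_1}\rangle\;<\;\langle r^{m/(p_1p_2)}\rangle\;<\;\cdots\;<\;\langle r\rangle=N
\]
are all characteristic in $N$ (each is the unique subgroup of its order inside a cyclic group), and since $N\trianglelefteq G$, each such characteristic subgroup is normal in $G$. The successive quotients are cyclic of prime order. Above $N$, one refines the cyclic group $G/N\cong\langle s\rangle$ analogously and pulls back through the projection $G\to G/N$ to obtain normal subgroups of $G$ interpolating between $N$ and $G$ with cyclic prime-order factors.

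The only nontrivial point is the normality of the refining terms inside $N$, and this is handled cleanly by the ``characteristic in $N$ implies normal in $G$'' principle applied to subgroups of the cyclic group $N$; everything else is a direct appeal to Theorem \ref{semi} and to the elementary fact that subgroups and quotients of cyclic groups are cyclic. I do not expect any real obstacle, since the semidirect decomposition in Theorem \ref{semi} already supplies the structural input needed, and the rest is the standard argument that an extension of a cyclic group by a cyclic group is supersolvable.
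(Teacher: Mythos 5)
Your proposal is correct and follows essentially the same route as the paper: invoke Theorem \ref{semi} to get $N=\langle r\rangle$ normal in $G$ with $G/N\cong H=\langle s\rangle$, and then observe that the chain $\{1\}\leq N\leq G$ has cyclic factors, which is exactly the paper's definition of supersolvable. Your additional refinement to prime-order factors (via characteristic subgroups of the cyclic group $N$) is a correct but unnecessary extra, since the paper's definition only asks for cyclic factors.
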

\begin{proof}
It is an immediate consequence of the theorem \ref{semi}. We have that the homomorphism $G\to G/N$ induces the isomorphism $H\to G/N$. Also, both subgroups are cyclic, so we have
 $$\{1\} \subseteq N \subseteq G $$ with $G/N\cong H$. Thus, $G$ is supersolvable.
 \end{proof}
 
 \section*{Final Remarks}
In this paper, which falls in the intersection of abstract group theory, combinatorial group theory and computational group theory, we made a review about the solvable, supersolvable and nilpotent groups relative to the computation of subgroups. We studied the more important concepts, properties and theorems that allowed us the classification of groups with non-solvable, non-supersolvable and non-nilpotent subgroups. Finally, we gave as contribution the supersolvability analysis of  (n,m)-groups and dihedral groups.\\

In the last years the structure of groups through their subgroups have deeply studied, that is, the quantity of subgroups or certain class to determine the group. In particular, Zarrin proved that a group with more than 26 subgroups with normalizers are solvable groups. In addition, a group with more than 22 derived subgroups is a solvable. In other way, the only one minimal simple group with exactly 23 derived subgroups is $A_5$. 

Further research can involve more general aspects such as follows:

Given two classes of groups $X$ and $Y$, provide a characterization of groups with $n$ subgroups non-X-subgroups not being Y-groups. In previous cases, $X$ means nilpotent and $Y$ means solvable. The interested reader can change $X$ and $Y$ by another structures.
\subsection*{Acknowledgements}
The research of the first author has been as a result of the Mathematics Scientific Writing lectures, within the MESCyT 2016-2017-081 research project.
The second author and third author have been supported by the MESCyT research project 2016-2017-081.
The authors thank the referee for his valuable suggestions.

\bibliographystyle{unsrt}
\bibliography{FArxivALM}

\end{document}